\newtheorem{theorem}{Theorem}[section]
\newtheorem{lemma}[theorem]{Lemma}
\newtheorem{proposition}[theorem]{Proposition}
\theoremstyle{definition}
\theoremstyle{remark}
\newtheorem{example}[theorem]{Example}
\newcounter{FNC}[page]
\def\fauxfootnote#1{{\addtocounter{FNC}{2}$^\fnsymbol{FNC}$%
     \let\thefootnote\relax\footnotetext{\Magenta{$^\fnsymbol{FNC}$#1}}}}
\def\cprime{$'$}
\newcommand{\calA}{{\mathcal A}}
\newcommand{\calB}{{\mathcal B}}
\newcommand{\calF}{{\mathcal F}}
\newcommand{\calG}{{\mathcal G}}
\newcommand{\calH}{{\mathcal H}}
\newcommand{\calS}{{\mathcal S}}
\newcommand{\C}{{\mathbb C}}
\newcommand{\N}{{\mathbb N}}
\newcommand{\R}{{\mathbb R}}
\newcommand{\Z}{{\mathbb Z}}
\newcommand{\ba}{{\bf a}}
\newcommand{\bb}{{\bf b}}
\newcommand{\bc}{{\bf c}}
\newcommand{\bd}{{\bf d}}
\newcommand{\fb}{{\bf f}}
\newcommand{\bg}{{\bf g}}
\newcommand{\taut}{\mbox{\rm taut}}
\DeclareMathOperator{\conv}{conv}
\DeclareMathOperator{\Aff}{Aff}
\DeclareMathOperator{\Log}{Log}
\DeclareMathOperator{\Exp}{Exp}
\newcommand{\simplex}{\includegraphics{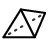}}
\newcommand{\bsimplex}{\includegraphics{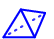}}
\newcommand{\defcolor}[1]{\Blue{#1}}
\newcommand{\demph}[1]{\defcolor{{\sl #1}}}
\title{Degenerations of Real Irrational Toric Varieties}
\author{Elisa Postinghel}
\address{Elisa Postinghel\\
         KU Leuven, Department of Mathematics\\
         Celestijnenlaan 200B, B-3001 Leuven, Belgium}
\email{elisa.postinghel@wis.kuleuven.be}
\urladdr{www.kuleuven.be/wieiswie/c/en/person/00092710}
\author{Frank Sottile}
\address{Frank Sottile\\
         Department of Mathematics\\
         Texas A\&M University\\
         College Station\\
         Texas \ 77843\\
         USA}
\email{sottile@math.tamu.edu}
\urladdr{www.math.tamu.edu/\~{}sottile}
\author{Nelly Villamizar}
\address{Nelly Villamizar\\
         RICAM, Austrian Academy of Sciences\\
         Linz, Austria}
\email{nelly.villamizar@oeaw.ac.at}
\urladdr{https://people.ricam.oeaw.ac.at/n.villamizar/}
\subjclass[2010]{14M25, 14P05}
\keywords{Toric variety, secondary fan, irrational toric variety, toric degeneration} 
\thanks{This material is based on work done while Postinghel and Sottile were in
  residence at the Institut Mittag-Leffler in Winter 2011.}
\thanks{Research of Sottile is supported in part by NSF grant DMS-1001615.}
\thanks{This material is based upon work supported by the National Science 
Foundation under Grant No. 0932078 000, while Sottile was in 
residence at the Mathematical Science Research Institute (MSRI) in 
Berkeley, California, during the winter semester of 2013.}
\thanks{Research of Postinghel and Villamizar was partially supported by
Marie-Curie IT Network SAGA,
grant PITN-GA-2008-214584.}
\thanks{Research of Postinghel was partially supported by the project 
``Secant varieties, computational complexity and toric degenerations''
realised within the Homing Plus programme of Foundation for
Polish Science, cofinanced from EU, Regional Development Fund.}
\begin{document}

\begin{abstract} 
A real irrational toric variety $X$ is an analytic subset of the simplex associated to a
   finite configuration of real vectors.
 The positive torus acts on $X$ by translation, and we consider limits of sequences of
 these translations.
 Our main result identifies all possible Hausdorff limits of translations of $X$
 as toric degenerations using elementary methods and the geometry of the secondary
   fan of the vector configuration. 
 This generalizes work of Garc\'ia-Puente et al., who used algebraic geometry and 
  work of Kapranov, Sturmfels, and Zelevinsky, when the vectors were integral.
\end{abstract}

\maketitle

\centerline{Dedicated to the memory of Andrei Zelevinsky}

%%%%%%%%%%%%%%%%%%%%%%%%%%%%%%%%%%%%%%%%%%%%%%%%%%%%%%%%%%%%%%%%%%%%%%
\section*{Introduction}
A not necessarily normal complex projective toric variety $Y_\calA$ is parametrized by
monomials whose exponent vectors form a finite set $\calA$ of integer vectors.
The theory of toric varieties~\cite{CLS} elucidates many ways how the structure of
$Y_\calA$ is encoded in the point set $\calA$.
For example, the set $X_\calA$ of nonnegative real points of $Y_\calA$ is
homeomorphic to the convex hull $\Delta_\calA$ of $\calA$ through a linear projection.

If we drop algebraicity, we may associate a real irrational toric
variety $X_\calA$ to any finite set $\calA$ of real vectors.
This is the analytic subvariety of the standard $\calA$-simplex $\simplex^\calA$
parametrized by monomials with exponent vectors from $\calA$, and it is
homeomorphic to the convex hull $\Delta_\calA$ of $\calA$ 
through a linear projection, by Birch's Theorem from algebraic
 statistics~\cite[p.~168]{Agr90}.
Other aspects of the dictionary between toric varieties $Y_\calA$ and 
sets of integer vectors $\calA$ extend to real irrational toric varieties
$X_\calA$ and finite sets of real vectors $\calA$~\cite{CGS}.

We further extend this dictionary.
When $\calA\subset\Z^d$, the torus $(\C^\times)^\calA$ of the ambient 
projective space of the toric variety $Y_\calA$ acts on it via translations.
Kapranov, Sturmfels, and Zelevinsky~\cite{KSZ1,KSZ2} identified 
the closure in the Hilbert scheme of the set of torus translations
$\{w.Y_\calA\mid w\in(\C^\times)^\calA\}$ as the toric variety associated to the secondary
fan of the set $\calA$.
The cones of this fan correspond to regular subdivisions of $\calA$, which are described
in~\cite[Ch.~7]{GKZ}. 

A consequence is that the only limiting schemes of torus translations of
$Y_\calA$ are one-parameter toric degenerations.
Restricting to the nonnegative part $X_\calA$ of $Y_\calA$ and to positive translations
$w.X_\calA$ for $w\in \R_>^\calA$ gives an identification between limiting positions of 
positive torus translations of $X_\calA$ and toric degenerations of $X_\calA$ by cosets of
one-parameter subgroups of $\R_>^\calA$.
Here, limiting position is measured with respect to the Hausdorff metric on subsets of the
nonnegative part of the projective space.
In~\cite{GSZ} this was used to identify all Hausdorff limits of B\'ezier
patches in geometric modeling.

We extend this identification between Hausdorff limits of torus translates
of $X_\calA$ and toric degenerations from the case
when $\calA$ consists of integer vectors (and thus the methods of~\cite{KSZ1,KSZ2} using
algebraic geometry apply) to the case when $\calA$ consists of any finite set of real vectors,
so that methods from algebraic geometry do not apply.

We do this through a direct argument, identifying all Hausdorff limits of torus
translations of $X_\calA$ with toric degenerations of $X_\calA$.
We state our main theorem.\medskip

\noindent{\bf Theorem~\ref{Th:creatingLimits}.}
{\it Let $\calA\subset\R^d$ be a finite set of real vectors and $\{w_i\mid i\in\N\}\subset\R^\calA_>$ a sequence
in the positive torus. 
  Then there exists a subsequence $\{u_j\mid j\in\N\}\subset\{w_i\mid i\in\N\}$ and a toric
 degeneration $X(\calS,w)$ such that
\[
    \lim_{j\to\infty} u_j.X_\calA\ =\ X(\calS,w)
\]
in the Hausdorff topology on subsets of the simplex $\simplex^\calA$.}\medskip

In particular, if a sequence of torus translates of $X_\calA$ has a limit in the Hausdorff
topology, then that limit is a toric degeneration.
This shows that the space of torus translates of $X_\calA$ is compactified by adding the
toric degenerations. 

Real irrational toric varieties arise naturally in mathematics and its applications.
They are log-linear (toric) models in algebraic statistics~\cite{PRW}, they were critical in the proof of the
improved bound~\cite{BS07} for positive solutions to fewnomial systems, and they are a natural framework in
which to understand properties of B\'ezier patches~\cite{GS,Kr02}.
Toric degenerations have played a role in these last two areas.
In geometric modeling, they are the possible limiting positions of a B\'ezier patch with varying weights,
giving rise to a meaningful generalization of the control polygon~\cite{CGS,GSZ}.
Toric degenerations were a key tool to compute lower
bounds on the number of real solutions~\cite{SS} to systems of polynomial equations.
We believe that further properties of real irrational toric varieties will be useful in all these application
domains.

This paper is organized as follows.
In Section 1, we develop some technical results about sequences in cones and of regular
subdivisions and the secondary fan.
In Section 2, we define a real irrational toric variety $X_\calA$ associated to a configuration
of points $\calA\subset\R^d$, recalling some of its properties and identifying its
torus translations, as well as recalling the Hausdorff metric and topology.
In Section 3, we study toric degenerations of $X_\calA$, relating them to the secondary
fan, and (re)state our main theorem.
Its proof occupies Section 4.

%%%%%%%%%%%%%%%%%%%%%%%%%%%%%%%%%%%%%%%%%%%%%%%%%%%%%%%%%%%%%%%%%%%%%%
\section{Some geometric combinatorics} 

We develop some
results about sequences in cones, regular subdivisions, and the secondary fan.
Write $\N=\{1,2,\dotsc\}$ for the positive integers, $\R$ for the real numbers, $\R_\geq$
for the nonnegative real numbers, and $\R_>$ for the strictly positive real numbers.
We use the standard notions of polyhedron, polytope, cone, face, etc. from geometric
combinatorics, which may be found in~\cite{DLRS,GKZ,Zi}.
For example, a polyhedron is an intersection of finitely many closed half-spaces, a
polytope is a bounded polyhedron and it is also the convex hull of a finite set of
points. 
Faces of a polyhedron are its intersections with hyperplanes bounding half-spaces
containing it.
The smallest affine space containing a polyhedron is its \demph{affine span} and its
\demph{relative interior} is its interior as a subset of its affine span.

A \demph{cone} is a polyhedron given by half-spaces $\{x\in\R^n\mid\psi(x)\geq 0\}$, where $\psi$ is a linear form.
The boundary hyperplane of such a half-space is the linear subspace 
$\defcolor{\psi^\perp}:=\{x\in\R^n\mid\psi(x)=0\}$.
The intersection of its boundary hyperplanes is the \demph{lineality space} of a cone and
it is \demph{pointed} if its lineality space is the origin.
Two faces $\rho,\sigma$ of a pointed cone $\tau$ are \demph{adjacent} if $\rho\cap\sigma$
is not the vertex of $\tau$.
The affine span of a cone $\sigma$ is its linear span, 
\defcolor{$\langle\sigma\rangle$}. 

A \demph{polyhedral complex} is a finite collection $\Pi$ of polyhedra 
that is closed under taking faces such that the intersection of any two polyhedra in
$\Pi$ is a face of both (or empty).
A polyhedral complex is a \demph{triangulation} if it consists of simplices.
A \demph{fan} is a polyhedral complex consisting of cones.

%%%%%%%%%%%%%%%%%%%%%%%%%%%%%%%%%%%%%%%%%%%%%%%%%%%%%%%%%%%%%%%%%%%%%%%%
\subsection{Sequences in cones}\label{S:SeqsInCones}
We formulate two technical results about sequences in cones, Lemmas~\ref{L:finalSubdivision}
and~\ref{L:cone_bounded}, that are used in an essential way in the formulation and proof of  
Theorem~\ref{Th:creatingLimits}.
Let \defcolor{$|x|$} be the usual Euclidean length of $x$ in a real vector space.

A sequence $\{v_i\mid i\in\N\}$, or simply 
\defcolor{$\{v_i\}$},  is \demph{divergent} if it has no bounded
subsequence, that is, if for all $M>0$, there is an $N$ such that if $i>N$, then
$|v_i|>M$.

%%%%%%%%%%%%%%%%%%%%%%%%%%%%%%%%%%%%%%%%%%%%%%%%%%%%%%%%%%%%%%%%%%%%%%%%%%%%%%%%%
\begin{lemma}\label{L:far_apart}
 If $\rho$ and $\sigma$ are non-adjacent faces of a pointed cone $\tau$ and 
 $\{r_i\mid i\in\N\}\subset\rho$ and $\{s_i\mid i\in\N\}\subset\sigma$ are 
 divergent sequences, then $\{r_i{-}s_i\mid i\in\N\}$  is divergent. 
\end{lemma}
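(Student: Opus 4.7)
The plan is to argue by contradiction: suppose $\{r_i - s_i\}$ is not divergent, so it has a bounded subsequence. After relabeling, assume $|r_i - s_i| \leq M$ for all $i$ and some constant $M > 0$. The key geometric input is that, since $\tau$ is pointed and $\rho$, $\sigma$ are non-adjacent faces, their intersection $\rho \cap \sigma$ is the vertex of $\tau$, which is the origin. Thus $\rho$ and $\sigma$ meet only at $0$, and I want to exploit this to force the difference $r_i - s_i$ to be large whenever $r_i$ or $s_i$ is large.

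The first step is to observe that $|r_i|$ and $|s_i|$ must both tend to infinity along some subsequence. Indeed, since $\{r_i\}$ is divergent, I can pass to a subsequence with $|r_i|\to\infty$; the triangle inequality $|s_i| \geq |r_i| - |r_i - s_i| \geq |r_i| - M$ then forces $|s_i|\to\infty$ as well. After a further passage to a subsequence I may assume $|r_i| \geq |s_i|$, and set $t_i := |r_i|$. The vectors $r_i/t_i$ and $s_i/t_i$ still lie in $\rho$ and $\sigma$ respectively, since these are cones closed under nonnegative scaling, and they are bounded: $|r_i/t_i| = 1$ and $|s_i/t_i| \leq 1$. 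By compactness of closed bounded subsets of $\R^n$ and closedness of $\rho,\sigma$, I can pass to a further subsequence so that $r_i/t_i \to \hat{r} \in \rho$ with $|\hat{r}|=1$ and $s_i/t_i \to \hat{s} \in \sigma$.

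On the other hand, $|r_i - s_i|/t_i \leq M/t_i \to 0$, so $\hat{r} = \hat{s}$. This common vector lies in $\rho \cap \sigma = \{0\}$, contradicting $|\hat{r}|=1$. I expect no serious obstacle here beyond setting up the normalization carefully; the only subtlety is making sure to normalize by $\max(|r_i|,|s_i|)$ (rather than by $|r_i|$ or $|s_i|$ alone) so that both normalized sequences remain in their respective cones and are simultaneously bounded, which is handled by the assumption $|r_i|\geq |s_i|$ obtained after extracting a subsequence.
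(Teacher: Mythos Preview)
Your proof is correct, but it takes a different route from the paper's. The paper argues directly: since $\rho\cap\sigma=\{0\}$, the compact sets $\rho\cap S$ and $\sigma\cap S$ on the unit sphere $S$ are disjoint, so there is a positive lower bound $\delta$ on $|r-s|$ for $r\in\rho\cap S$, $s\in\sigma\cap S$. Given $M>0$, divergence gives $N$ with $|r_i|,|s_i|>M/\delta$ for $i>N$, and then (assuming $|r_i|\geq|s_i|$) the chain
\[
  |r_i-s_i|\ =\ |s_i|\left|\frac{r_i}{|s_i|}-\frac{s_i}{|s_i|}\right|\ \geq\ |s_i|\left|\frac{r_i}{|r_i|}-\frac{s_i}{|s_i|}\right|\ >\ \frac{M}{\delta}\cdot\delta\ =\ M
\]
does the job, using the elementary fact that $|tu-v|\geq|u-v|$ for unit vectors $u,v$ and $t\geq 1$. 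So the paper's argument is quantitative and yields an explicit $N$ for each $M$, at the cost of invoking that small triangle-geometry lemma. Your compactness-and-contradiction argument is softer and avoids that lemma entirely; it is arguably more routine, but gives no effective bound. Either approach is perfectly adequate here since the lemma is only used qualitatively later in the paper.
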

%%%%%%%%%%%%%%%%%%%%%%%%%%%%%%%%%%%%%%%%%%%%%%%%%%%%%%%%%%%%%%%%%%%%%%%%%%%%%%%%%

%%%%%%%%%%%%%%%%%%%%%%%%%%%%%%%%%%%%%%%%%%%%%%%%%%%%%%%%%%%%%%%%%%%%%%%%%%%%%%%%%
\begin{proof}
 Let $S$ be the unit sphere centred at the origin. 
 As $\sigma,\rho$ are not adjacent, there is a positive lower bound, $\delta$, for the
 distance $|r{-}s|$ between any pair of points $r\in\rho\cap S$ and $s\in\sigma\cap S$.

 Let $M>0$.  Since $\{r_i\mid i\in\N\}$ and $\{s_i\mid i\in\N\}$ are divergent sequences, 
 there exists $N$ such that  $|r_i|,|s_i|>M/\delta$ for all $i>N$.
 Take $i>N$, and suppose that $|r_i|\geq|s_i|$ 
 (otherwise interchange the sequences). 
 Then
 \begin{equation}
   \label{Eq:stringIneqs}
    |r_i-s_i|\ =\ |s_i|\left|\frac{r_i}{|s_i|}-\frac{s_i}{|s_i|}\right|
     \ \geq\ |s_i| \left|\frac{r_i}{|r_i|}-\frac{s_i}{|s_i|}\right|
     \ >\ \frac{M}{\delta} \delta\ =\ M\,,
 \end{equation}
 which completes the proof.
\end{proof}
%%%%%%%%%%%%%%%%%%%%%%%%%%%%%%%%%%%%%%%%%%%%%%%%%%%%%%%%%%%%%%%%%%%%%%%%%%%%%%%%%

The first inequality in~\eqref{Eq:stringIneqs} is elementary geometry:
If $u,v$ are unit vectors and $t\geq 1$, then
\[
    |tu-v|\ \geq\ |u-v|\,.
\]
This is clear if $u=v$.
Otherwise, let $\theta$ be the angle between $u$ and $v$.
Then in the triangle with vertices $tu,u,v$, the angle at $u$ is
$\frac{\pi}{2}+\frac{\theta}{2}$, which is obtuse.
Then this inequality is just that the longest side of a triangle is opposite to its largest 
angle. 

Let $\tau$ be a cone in $\R^n$, $\{v_i\mid i\in\N\}$ a sequence in $\tau$, and $\sigma$
a face of $\tau$. 
We say that $\{v_i\}$ is \demph{$\sigma$-bounded}  if for every linear
function $\psi$ vanishing on $\sigma$, the set $\{\psi(v_i)\mid i\in\N\}$ is bounded in $\R$.
It is immediate that if $\{v_i\}$ 
is both $\sigma$- and $\rho$-bounded, then it is $(\sigma\cap\rho)$-bounded.
The \demph{minimum face of boundedness} of $\{v_i\}\subset\tau$ 
is the smallest face $\sigma$ of
$\tau$ such that $\{v_i\}$ is $\sigma$-bounded.

%%%%%%%%%%%%%%%%%%%%%%%%%%%%%%%%%%%%%%%%%%%%%%%%%%%%%%%%%%%%%%%%%%%%%%%%%%%%%%%%%
\begin{example}\label{Ex:mmfb}
 Let $\sigma:=\R_>\cdot(-1,-1)$ and $\rho:=\R_>\cdot(0,-1)$ be rays in $\R^2$, and $\tau$ the cone they span.
 Consider the two sequences $\{v_i\}$ and $\{u_i\}$ in the cone $\tau$, defined for $i\geq 1$ by
\[
   v_i\ :=\ \bigl( -i-\tfrac{1}{i}\,,\, -i-1\bigr) 
    \qquad\mbox{and}\qquad
   u_i\ :=\ \bigl( -i+\sqrt{i}\,,\, -i\bigr) \,.
\]
 Neither sequence is $0$- or $\rho$-bounded and both are $\tau$-bounded.
 However, only $\{v_i\}$ is $\sigma$-bounded.
 Note that both sequences have the same asymptotic direction (along $\sigma$),
\[
   \lim_{i\to\infty}\frac{v_i}{|v_i|}\ =\ 
   \lim_{i\to\infty}\frac{u_i}{|u_i|}\ =\  (-1,-1)\,.
\]
 We display the first few terms of the two sequences in the cone $\tau$ below.
\[
  \begin{picture}(140,81)(-1,0)
   \put(0,0){\includegraphics{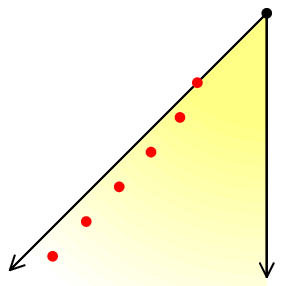}}
   \put(2,15){$\sigma$}  \put(58,13){$\tau$}  \put(86,14){$\rho$}
   \put(22,46){$v_i$}
  \end{picture}
  \qquad
  \begin{picture}(140,81)(-1,0)
   \put(0,0){\includegraphics{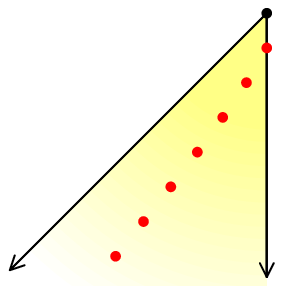}}
   \put(2,15){$\sigma$}  \put(58,13){$\tau$}  \put(86,14){$\rho$}
   \put(22,46){$u_i$}
  \end{picture}
\]
\end{example}
%%%%%%%%%%%%%%%%%%%%%%%%%%%%%%%%%%%%%%%%%%%%%%%%%%%%%%%%%%%%%%%%%%%%%%%%%%%%%%%%%

%%%%%%%%%%%%%%%%%%%%%%%%%%%%%%%%%%%%%%%%%%%%%%%%%%%%%%%%%%%%%%%%%%%%%%%%%%%%%%%%%
\begin{lemma}\label{L:finalSubdivision}
 Let $\{v_i\mid i\in\N\}\subset\tau$ be a sequence in a cone $\tau$.
 Then there is a face $\sigma$ of $\tau$ and a subsequence $\{u_i\mid i\in\N\}$ of $\{v_i\}$ such that
 $\sigma$ is the minimum face of boundedness of any subsequence of $\{u_i\}$.
\end{lemma}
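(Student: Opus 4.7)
The plan is to exploit the fact that the face lattice of $\tau$ is finite, so as we pass to subsequences the minimum face of boundedness can only shrink, and this descent must terminate. First, the minimum face of boundedness $\sigma(\{v_i\})$ of a sequence $\{v_i\}\subset\tau$ is well-defined: the set of faces of boundedness is nonempty (any linear functional vanishing on $\tau$ vanishes on $\langle\tau\rangle$ and hence on every $v_i$, so $\{v_i\}$ is always $\tau$-bounded) and, as the text already observes, is closed under intersection, so its unique minimum exists.

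The next observation is monotonicity: if $\{u_i\}$ is a subsequence of $\{v_i\}$, then $\{u_i\}$ inherits $\rho$-boundedness from $\{v_i\}$ for every face $\rho$, so $\sigma(\{u_i\})\subseteq\sigma(\{v_i\})$ as faces. Therefore the collection
\[
   \bigl\{\,\sigma(\{u_i\}) \,:\, \{u_i\}\text{ is a subsequence of }\{v_i\}\,\bigr\}
\]
is a nonempty subset of the finite face lattice of $\tau$, and hence admits at least one minimal element. Fix such a $\sigma$ and a subsequence $\{u_i\}$ with $\sigma(\{u_i\})=\sigma$.

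To conclude, let $\{w_i\}$ be an arbitrary subsequence of $\{u_i\}$. Then $\{w_i\}$ is also a subsequence of $\{v_i\}$, so $\sigma(\{w_i\})$ belongs to the collection above; monotonicity gives $\sigma(\{w_i\})\subseteq\sigma$, and the minimality of $\sigma$ within the collection forces $\sigma(\{w_i\})=\sigma$. This yields the required $\sigma$ and $\{u_i\}$.

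No real obstacle arises in this plan: once well-definedness of $\sigma(\cdot)$ and monotonicity under subsequences are in hand, the argument is a direct application of the descending chain condition in the finite face lattice, and the only ingredient that deserves a moment of care is the well-definedness, which reduces to the intersection-closure of the set of faces of boundedness already flagged in the text.
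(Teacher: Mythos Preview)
Your proof is correct and follows essentially the same approach as the paper: both arguments select a minimal element from a finite family of faces (the paper uses the order ideal of faces admitting a bounded subsequence, you use the set of minimum faces of boundedness over all subsequences) and then invoke that minimality to show stability under further subsequences. The only cosmetic difference is that you phrase the key step via the monotonicity $\sigma(\{w_i\})\subseteq\sigma(\{u_i\})$, whereas the paper phrases it via the order-ideal property of its set $\calB$; these are equivalent observations.
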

%%%%%%%%%%%%%%%%%%%%%%%%%%%%%%%%%%%%%%%%%%%%%%%%%%%%%%%%%%%%%%%%%%%%%%%%%%%%%%%%%

%%%%%%%%%%%%%%%%%%%%%%%%%%%%%%%%%%%%%%%%%%%%%%%%%%%%%%%%%%%%%%%%%%%%%%%%%%%%%%%%%
\begin{proof}
 The set \defcolor{$\calB$} of faces $\sigma$ of $\tau$ for which $\{v_i\}$ 
 has a subsequence that is $\sigma$-bounded is nonempty ($\tau\in\calB$).
 It forms an order ideal, for if $\sigma\in\calB$ and $\sigma\subset\rho$ a face of $\tau$, then 
 $\rho\in\calB$.
 Let \defcolor{$\sigma$} be a minimal element of $\calB$ and $\{u_i\mid i\in\N\}$ be a $\sigma$-bounded subsequence.
 The minimum face of boundedness of a subsequence of $\{u_i\}$ is a subface of $\sigma$ and therefore
equals $\sigma$, by its minimality in $\calB$.
\end{proof}

%%%%%%%%%%%%%%%%%%%%%%%%%%%%%%%%%%%%%%%%%%%%%%%%%%%%%%%%%%%%%%%%%%%%%%%%%%%%

%%%%%%%%%%%%%%%%%%%%%%%%%%%%%%%%%%%%%%%%%%%%%%%%%%%%%%%%%%%%%%%%%%%%%%%%%%%%%%%%%

%%%%%%%%%%%%%%%%%%%%%%%%%%%%%%%%%%%%%%%%%%%%%%%%%%%%%%%%%%%%%%%%%%%%%%%%%%%%%%%%%
\begin{lemma}\label{L:cone_bounded}
 Let $\tau$ be a cone in $\R^n$, $\{v_i\mid i\in\N\}\subset\tau$ a sequence whose minimum face of boundedness is
 $\sigma$. 
  Then, for any $v\in\sigma$, all except finitely many elements of the sequence
  $\{v_i-v\mid i\in\N\}$ lie in $\tau$.
\end{lemma}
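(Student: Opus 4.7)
The plan is to reduce to showing that for each facet functional $\psi_k$ of $\tau$ that does not vanish on $\sigma$, we have $\psi_k(v_i) \to +\infty$. Given this, for any $v \in \sigma$ we get $\psi_k(v_i - v) \geq 0$ eventually: trivially when $\psi_k$ vanishes on $\sigma$ (as $\psi_k(v) = 0$ and $\psi_k(v_i) \geq 0$ since $v_i \in \tau$), and because $\psi_k(v_i) \geq \psi_k(v)$ for large $i$ otherwise. So $v_i - v$ eventually satisfies all defining inequalities of $\tau$ and hence lies in $\tau$.

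To prove $\psi_k(v_i) \to +\infty$ for such a $\psi_k$, I argue by contradiction: if some subsequence has $\psi_k(v_{i_j}) \in [0, M]$, I aim to show $\{v_{i_j}\}$ is $\sigma'$-bounded for the proper subface $\sigma' := \sigma \cap \{\psi_k = 0\} \subsetneq \sigma$. Invoking the hypothesis in the stronger form produced by Lemma~\ref{L:finalSubdivision} (that $\sigma$ is also the minimum face of boundedness of \emph{every} subsequence of $\{v_i\}$, which a simple example with an alternating bounded/unbounded sequence shows is genuinely needed), this $\sigma'$-boundedness contradicts minimality of $\sigma$ on the subsequence.

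The key technical step, the $\sigma'$-boundedness claim, would go via two applications of Minkowski--Weyl. First, $\sigma$-boundedness lets me split $v_{i_j} = s_{i_j} + b_{i_j}$ with $s_{i_j} \in \langle\sigma\rangle$ and $\{b_{i_j}\}$ bounded; the inequalities $\psi_l(v_{i_j}) \geq 0$ then place $s_{i_j}$ in a polyhedron $P \subset \langle\sigma\rangle$ whose recession cone is $\langle\sigma\rangle \cap \tau = \sigma$, so Minkowski--Weyl writes $s_{i_j} = q_{i_j} + \sigma_{i_j}$ with $\{q_{i_j}\}$ bounded and $\sigma_{i_j} \in \sigma$. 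Second, $\psi_k$-boundedness forces $\sigma_{i_j}$ into the slab $\sigma \cap \{\psi_k \leq M'\}$, whose recession cone is $\sigma'$, giving $\sigma_{i_j} = q'_{i_j} + \sigma'_{i_j}$ with $\{q'_{i_j}\}$ bounded and $\sigma'_{i_j} \in \sigma' \subset \langle\sigma'\rangle$. Assembling, $v_{i_j}$ is a bounded vector plus an element of $\langle\sigma'\rangle$, which is exactly $\sigma'$-boundedness. The main obstacle is that a purely linear-algebra reduction (combining $\sigma$-boundedness with $\psi_k$-boundedness by merely adding annihilators of functionals) only yields boundedness modulo $\langle\sigma\rangle \cap \{\psi_k = 0\}$, which may strictly contain $\langle\sigma'\rangle$ when $\sigma' = \sigma \cap \{\psi_k = 0\}$ has codimension greater than one in $\sigma$ (for instance, when $\sigma$ is a two-dimensional wedge met by $\{\psi_k = 0\}$ only at the origin); exploiting the cone constraint $v_i \in \tau$ through the Minkowski--Weyl decomposition is what bridges this gap.
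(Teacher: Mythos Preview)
Your proof is correct but works harder than the paper's. The paper argues by contradiction directly on the conclusion: if infinitely many $v_i - v$ lie outside $\tau$, pigeonhole on the (finitely many, irredundant) facet forms to get a single $\psi$ and a subsequence $\{w_k\}$ with $0 \leq \psi(w_k) < \psi(v)$. The key simplification is that this bound alone makes $\{w_k\}$ $\rho$-bounded for the \emph{facet} $\rho = \psi^\perp \cap \tau$, because (after reducing to $\langle\tau\rangle = \R^n$) the linear functions vanishing on a facet are exactly the scalar multiples of its defining form. Strengthened minimality then forces $\sigma \subset \rho \subset \psi^\perp$, so $\psi(v) = 0$, contradicting $\psi(w_k) < \psi(v)$. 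You instead aim for $\sigma'$-boundedness with $\sigma' = \sigma \cap \psi_k^\perp$, which---as you correctly diagnose---is not a purely linear-algebraic consequence of $\sigma$-boundedness plus a $\psi_k$-bound, and so genuinely requires your two-stage Minkowski--Weyl decomposition. That machinery is sound, but unnecessary: in your own setup, once $\psi_k(v_{i_j}) \in [0,M]$ the subsequence is already $\rho_k$-bounded for the facet $\rho_k = \tau \cap \psi_k^\perp$, whence $\sigma \subset \rho_k$, contradicting that $\psi_k$ does not vanish on $\sigma$---without ever descending to $\sigma'$. You are also right that both arguments need the subsequence-stable form of the hypothesis furnished by Lemma~\ref{L:finalSubdivision}; the paper invokes exactly this when it concludes $\sigma \subset \rho$ from the existence of a $\rho$-bounded subsequence.
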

%%%%%%%%%%%%%%%%%%%%%%%%%%%%%%%%%%%%%%%%%%%%%%%%%%%%%%%%%%%%%%%%%%%%%%%%%%%%%%%%%

%%%%%%%%%%%%%%%%%%%%%%%%%%%%%%%%%%%%%%%%%%%%%%%%%%%%%%%%%%%%%%%%%%%%%%%%%%%%%%%%%
\begin{proof}
 Suppose by way of contradiction that $v\in\sigma$ and $\{u_j\mid j\in\N\}$ is an
 infinite subset of $\{v_i\}$ such that 
 $\{u_j-v\mid j\in\N\}$ is disjoint from $\tau$. 
 Without loss of generality, assume that $\langle\tau\rangle=\R^n$.
 Let $\Psi$ be a finite irredundant collection of linear forms defining $\tau$,
\[
   \tau\ =\  \{x\in\R^n\,\mid\, \psi(x)\geq0\quad\mbox{for all }\psi\in\Psi\}\,.
\]
 Since $\Psi$ is irredundant and $\langle\tau\rangle=\R^n$, each form $\psi\in\Psi$
 supports a facet $\psi^\perp\cap\tau$ of $\tau$.

 Since $\{u_j-v\mid j\in\N\}\cap\tau=\emptyset$, for each $j\in\N$ there is a form
 $\psi\in\Psi$ with $\psi(u_j-v)<0$ so that $\psi(u_j)<\psi(v)$.
 Since $\Psi$ is finite, there is a subsequence $\{w_k\mid k\in\N\}$ of 
 $\{u_j\}$ and a form $\psi\in\Psi$ such that $\psi(w_k)<\psi(v)$ for all
 $k\in\N$. 
 Since $\{w_k\}\subset\tau$, we have that 
 $\psi(w_k)\in[0,\psi(v))$ for all $k\in\N$.
Thus if $\defcolor{\rho}:=\psi^\perp\cap\tau$ is the face of $\tau$ given by $\psi$,
   then $\{w_k\}$ is $\rho$-bounded.

 Since $\{v_i\}$ has a $\rho$-bounded subsequence,
 we have $\sigma\subset\rho$. 
 Since $v\in\sigma$ and $\rho\subset\psi^\perp$, we have $\psi(v)=0$, which contradicts
 the inequality $0\leq\psi(u_j)<\psi(v)$ that we have for any $j$.
 This concludes the proof.
\end{proof}
%%%%%%%%%%%%%%%%%%%%%%%%%%%%%%%%%%%%%%%%%%%%%%%%%%%%%%%%%%%%%%%%%%%%%%%%%%%%%%%%%

%%%%%%%%%%%%%%%%%%%%%%%%%%%%%%%%%%%%%%%%%%%%%%%%%%%%%%%%%%%%%%%%%%%%%%%%
\subsection{Regular subdivisions}\label{S:RegSubdv}
Fix a positive integer $d$ and let $\calA\subset\R^d$ be a finite set of points, which we
assume affinely spans $\R^d$.
We use elements of $\calA$ throughout to index coordinates, variables, functions,
etc.
For example, $\R^\calA$ is the space of real-valued functions on $\calA$.
This has a distinguished subspace $\defcolor{\Aff(\calA)}\simeq\R^{d+1}$, consisting of 
functions on $\calA$ that are restrictions of affine functions on $\R^d$.
For $z\in\R^\calA$ and $\ba\in\calA$, we write $z_\ba$ for $z(\ba)$, its $\ba$-th coordinate.

For any subset $\calF\subset\calA$, the extension by zero gives an inclusion 
$\R^\calF\hookrightarrow\R^\calA$ and the restriction of functions given by $w\mapsto w|_\calF$ defines a
map $\R^\calA\to\R^\calF$.

For $\calF\subset\calA$, let \defcolor{$\Delta_\calF$} be the convex hull of
$\calF$ i.e.,
\[
   \defcolor{\Delta_\calF}\ :=\ \Bigl\{\sum_{\fb\in\calF} \mu_{\fb}\fb \mid \mu_\fb\geq0
     \ \mbox{ and }\ 1=\sum_{\fb\in\calF} \mu_{\fb}\Bigr\}\,.
\]
A \demph{polyhedral subdivision $\calS$} of $\calA$ is a collection of subsets $\calF$ of $\calA$,
called \demph{faces} of $\calS$, whose convex hulls 
$\{\Delta_\calF\mid \calF\in\calS\}$ form a polyhedral complex \defcolor{$\Pi_\calS$}
which covers $\Delta_\calA$.
If $\calF,\calG$ are faces of a polyhedral subdivision $\calS$ of $\calA$,
then $\calH:=\calF\cap\calG$ is also a face of $\calS$ and 
$\Delta_\calH=\Delta_\calF\cap\Delta_{\calG}$.
A \demph{facet} is a maximal face of $\calS$, which is a face $\calF$ that affinely spans
$\R^d$ so that $\Delta_\calF$ has dimension $d$.
A \demph{triangulation} of $\calA$ is a polyhedral subdivision $\calS$ in which every
face $\Delta_\calF$ of $\Pi_\calS$ is a simplex with vertices $\calF$.
A polyhedral subdivision $\calS$ is \demph{regular} if there is a 
piecewise-affine concave
function $g$ on $\Delta_\calA$ where the maximal domains on which $g$ is affine
are $\Delta_\calF$ for facets $\calF$ of $\calS$. 
Such a concave function $g$ is \demph{strictly concave} on the subdivision $\calS$.

Elements $\lambda\in\R^\calA$ induce regular subdivisions of $\calA$ in the following way.
Let \defcolor{$P_\lambda$} be the convex hull of the graph of $\lambda$, defined by
\[
    P_\lambda\ :=\ \conv\{ (\ba,\lambda(\ba))\,\mid\, \ba\in\calA\}\,.
\]
Its \demph{upper faces} are those having an outward-pointing normal vector with last coordinate positive.
For an upper face $F$ of $P_\lambda$, let \defcolor{$\calF(F)$} be the points $\ba$ of $\calA$ such that
$(\ba,\lambda(\ba))$ lies on $F$. 
Let \defcolor{$\calS_\lambda$} be the collection of subsets $\calF(F)$ of $\calA$ where
$F$ ranges over the upper faces of $P_\lambda$.
This forms a polyhedral subdivision of $\calA$ as the upper faces of $P_\lambda$ 
form a polyhedral complex whose projection to $\Delta_\calA$ covers
$\Delta_\calA$, and the projection of an upper face $F$ is $\Delta_{\calF(F)}$.

Lastly, $S_\lambda$ is regular---the upper faces of $P_\lambda$ form the
graph of the desired concave function, \defcolor{$g_\lambda$}. 
Conversely, if $\calS$ is a regular subdivision with strictly concave function $g$, then
any $\lambda\in\R^\calA$ satisfying 
$\lambda(\ba)\leq g(\ba)$ with equality if and only if $\ba$ lies in some face of $\calS$
has $\calS=\calS_\lambda$.

%%%%%%%%%%%%%%%%%%%%%%%%%%%%%%%%%%%%%%%%%%%%%%%%%%%%%%%%%%%%%%%%%%%%%%%%%%%%%%%%%
\begin{example}
 Let $\calA\subset\R^2$ be a $3\times 3$ grid of nine points.
 Figure~\ref{F:upper_hulls} shows three polyhedral subdivisions of $\calA$ induced
 by elements $\lambda\in\R^\calA$, together with the lifted points
 $\{(\ba,\lambda(\ba)) \mid\ba\in\calA \}$ and the corresponding upper faces.
%%%%%%%%%%%%%%%%%%%%%%%%%%%%%%%%%%%%%%%%%%%%%%%%%%%%%%%%%%%%%%%%%%%%%%%%%%%%%%%%%
\begin{figure}[htb]
 \includegraphics[height=90pt]{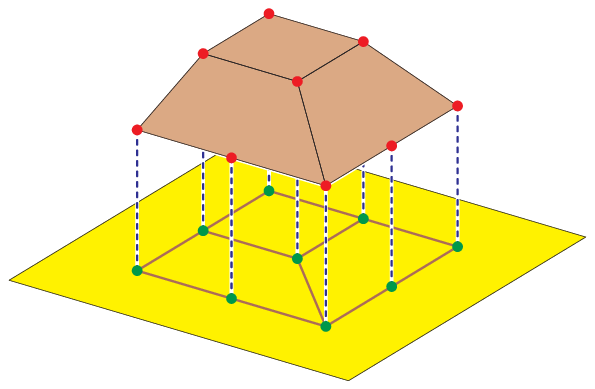}\quad
  \includegraphics[height=90pt]{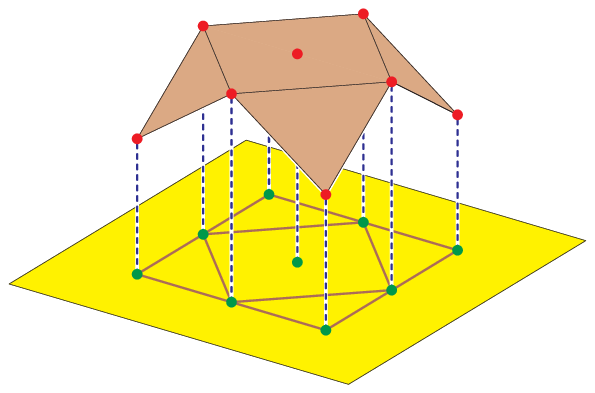}\quad
  \includegraphics[height=90pt]{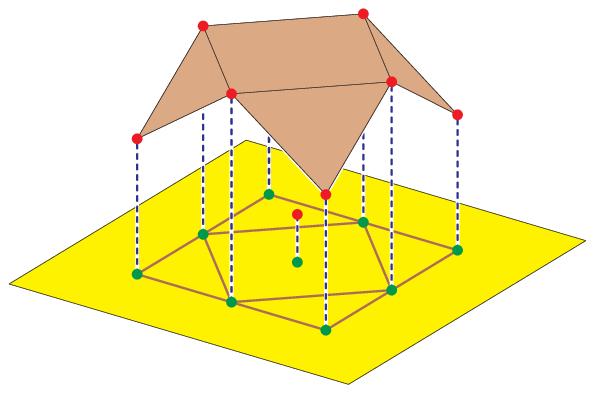}
 \caption{Three regular subdivisions}
 \label{F:upper_hulls}
\end{figure}
%%%%%%%%%%%%%%%%%%%%%%%%%%%%%%%%%%%%%%%%%%%%%%%%%%%%%%%%%%%%%%%%%%%%%%%%%%%%%%%%%
 All elements of $\calA$ participate in the first two subdivisions, but the
 center element of $\calA$ does not participate in the third, for 
 it does not lie on an upper face.
\end{example}
%%%%%%%%%%%%%%%%%%%%%%%%%%%%%%%%%%%%%%%%%%%%%%%%%%%%%%%%%%%%%%%%%%%%%%%%%%%%%%%%%

A subset $\calF$ of $\calA$ is a subset of a face of $\calS_\lambda$ if and
only if the restriction
$\lambda|_\calF$ of $\lambda$ to $\calF$ is an affine function whose extension to 
$\Delta_\calF$ agrees with the restriction of $g_\lambda$ to $\Delta_\calF$,
where $g_\lambda$ is the strictly convex function whose graph is the upper faces
  of $P_\lambda$.
The minimal subsets of $\calA$ that are not contained in any face of $\calS_\lambda$ are
singletons $\{\bc\}$ that do not participate in the subdivision and doubletons
$\{\ba,\bb\}$ in which both $\ba$ and $\bb$ participate in the subdivision, but no face
contains both, so that the interior of the line segment between the lifted points
  lies below the upper faces.

%%%%%%%%%%%%%%%%%%%%%%%%%%%%%%%%%%%%%%%%%%%%%%%%%%%%%%%%%%%%%%%%%%%%%%%%%%%%%%%%%
\begin{lemma}\label{L:convex_combinations}
 Let $\calS=\calS_\lambda$ be a regular subdivision of $\calA$.
 \begin{enumerate}
  \item If $\{\ba,\bb\}\subset\calA$ is not a subset of any face of $\calS$, then there is
    a facet $\calG$ of $\calS$, a point $p\in\Delta_\calG$, and numbers
    $\beta_\ba,\beta_\bb>0$ and $ \alpha_\bg\geq 0$ for $\bg\in\calG$ with  
   \begin{equation}\label{Eq:convex_ii}
    p\ =\ \beta_\ba \ba + \beta_\bb \bb \ =\ \sum_{\bg\in\calG} \alpha_\bg \bg \,.
     \qquad\mbox{where}\quad 
    1\ =\ \beta_\ba+\beta_\bb\ =\ \sum_{\bg\in\calG} \alpha_\bg\ .
   \end{equation}

  \item If $\bc\in\calA$ is not a member of any face of $\calS$, then there is a facet
    $\calG$ of $\calS$ with $\bc\in\Delta_\calG$ and therefore an expression 
   \begin{equation}\label{Eq:convex_i}
    \bc\ =\ \sum_{\bg\in\calG} \alpha_\bg \bg
     \qquad\mbox{where}\quad \alpha_\bg\geq 0 
     \mbox{ and } 1\ =\ \sum_{\bg\in\calG} \alpha_\bg\,.
   \end{equation}

  \item  If $\calG$ is a facet of $\calS$ and $\bd\not\in\calG$, then there is an expression
   \begin{equation}\label{Eq:affine_bd}
     \bd\ =\ \sum_{\bg\in\calG} \alpha_\bg \bg
     \qquad\mbox{where}\quad 1\ =\ \sum_{\bg\in\calG} \alpha_\bg\,,
    \end{equation}
   of $\bd$ as an affine combination of points of $\calG$.

 \end{enumerate}
 In any of {\rm(i)}, {\rm(ii)}, or {\rm(iii)}, if\/ \defcolor{$\widetilde{\lambda}$} is the affine
 function whose restriction to the facet $\calG$ agrees with $\lambda$,
 then  
\[
  \widetilde{\lambda}\biggl(\sum_{\bg\in\calG} \alpha_\bg\bg\biggr)
  \ =\ \sum_{\bg\in\calG} \alpha_\bg \lambda(\bg)\,,
\]
 and we have the (respective) inequalities
  \begin{equation}\label{Eq:liftIneq}
   \beta_\ba\lambda(\ba)+\beta_\bb\lambda(\bb)<\widetilde{\lambda}(p)\,,\ \ 
   \lambda(\bc)<\widetilde{\lambda}(\bc)\,,\ \ 
   \mbox{and}\ \ 
   \lambda(\bd) < \widetilde{\lambda}(\bd)\,.
  \end{equation}
\end{lemma}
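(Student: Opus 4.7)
The plan is to use the strictly concave piecewise-affine function $g = g_\lambda$ on $\Delta_\calA$ whose graph forms the upper faces of $P_\lambda$. Two key facts: for each facet $\calG$ of $\calS$, $g|_{\Delta_\calG} = \widetilde{\lambda}|_{\Delta_\calG}$, and for $\ba\in\calA$ one has $\lambda(\ba)\leq g(\ba)$ with equality if and only if $\ba$ participates in $\calS$. Combined with strict concavity on $\calS$, these yield the crucial comparison: for any facet $\calG$ and any $x\in\Delta_\calA$, $g(x)\leq\widetilde{\lambda}(x)$ with equality exactly when $x\in\Delta_\calG$, since $\widetilde{\lambda}$ is affine, matches $g$ on the full-dimensional cell $\Delta_\calG$, and $g$ is strictly concave with respect to $\calS$.

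For the existence parts: (iii) is immediate since a facet $\calG$ affinely spans $\R^d$. For (ii), $\bc\in\Delta_\calA$ so $\bc$ lies in some cell $\Delta_\calG$ of $\Pi_\calS$. For (i), the open segment from $\ba$ to $\bb$ meets the relative interior of some cell $\Delta_\calG$; picking $p$ in that intersection produces both expressions in~\eqref{Eq:convex_ii}. The identity $\widetilde{\lambda}(\sum_\bg\alpha_\bg\bg)=\sum_\bg\alpha_\bg\lambda(\bg)$ follows from affinity of $\widetilde{\lambda}$ together with $\widetilde{\lambda}|_\calG = \lambda|_\calG$ and $\sum_\bg\alpha_\bg = 1$.

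For the strict inequalities~\eqref{Eq:liftIneq}, (ii) follows directly since $\bc$ does not participate: $\lambda(\bc) < g(\bc) = \widetilde{\lambda}(\bc)$. In (iii), if $\bd$ does not participate, the chain $\lambda(\bd) < g(\bd) \leq \widetilde{\lambda}(\bd)$ suffices; if $\bd$ does participate, then $\bd \notin \Delta_\calG$ (otherwise the lift of $\bd$ would sit on the upper face over $\calG$, forcing $\bd\in\calG$, contrary to hypothesis), and the key comparison gives $\lambda(\bd) = g(\bd) < \widetilde{\lambda}(\bd)$. The hard part is the strict inequality in (i). I would proceed by considering the affine function $f(x,y) := \widetilde{\lambda}(x) - y$, which is nonnegative on $P_\lambda$ because $P_\lambda$ lies weakly below the graph of $\widetilde{\lambda}$ (the supporting hyperplane of the upper face over $\calG$). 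The chord from $(\ba,\lambda(\ba))$ to $(\bb,\lambda(\bb))$ lies in $P_\lambda$, so $f \geq 0$ along it, yielding the weak inequality $\beta_\ba\lambda(\ba)+\beta_\bb\lambda(\bb)\leq\widetilde{\lambda}(p)$. Equality would force the nonnegative affine function $f|_{\text{chord}}$ to vanish at the interior point $(p,\beta_\ba\lambda(\ba)+\beta_\bb\lambda(\bb))$, hence to vanish identically on the chord; this would give $\lambda(\ba) = \widetilde{\lambda}(\ba)$ and $\lambda(\bb) = \widetilde{\lambda}(\bb)$, placing both lifted points on the upper face over $\calG$ and so $\{\ba,\bb\}\subset\calG$, contradicting the hypothesis.
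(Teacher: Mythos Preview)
Your argument is correct and follows essentially the same geometric route as the paper: both proofs rest on the fact that the graph of $\widetilde{\lambda}$ is the supporting hyperplane of the upper face of $P_\lambda$ over $\calG$, so every lifted point $(\ba,\lambda(\ba))$ lies weakly below it with equality exactly when $\ba\in\calG$. Your treatment of (i) via the affine function $f(x,y)=\widetilde{\lambda}(x)-y$ on the chord is just an explicit unpacking of the paper's one-line ``the interior of the segment joining the lifted points lies strictly below the upper hull''; your case split in (iii) is slightly more roundabout than necessary, since the supporting-hyperplane inequality $\lambda(\bd)<\widetilde{\lambda}(\bd)$ for $\bd\notin\calG$ holds directly without appealing to $g$ or distinguishing whether $\bd$ participates.
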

%%%%%%%%%%%%%%%%%%%%%%%%%%%%%%%%%%%%%%%%%%%%%%%%%%%%%%%%%%%%%%%%%%%%%%%%%%%%%%%%%

%%%%%%%%%%%%%%%%%%%%%%%%%%%%%%%%%%%%%%%%%%%%%%%%%%%%%%%%%%%%%%%%%%%%%%%%%%%%%%%%%
\begin{proof}
 If $\{\ba,\bb\}$ is not a subset of any face of $\calS$, then the interior of the
 line segment they span meets the  convex hull $\Delta_\calF$ of
 some facet $\calG$ of $\calS$ in a point $p$.
 This gives the expression~\eqref{Eq:convex_ii}. 
 The first inequality of~\eqref{Eq:liftIneq} expresses that the interior of the segment
 joining the lifted points $(\ba,\lambda(\ba))$ 
 and $(\bb,\lambda(\bb))$ lies strictly below the upper hull of $P_\lambda$.

 If $\bc\in\calA$ is not a member of a face of $\calS$, then there is a facet $\calG$ of
 $\calS$ with $\bc$ lying in $\Delta_\calG$.
 Thus $\bc$ is a convex combination of the points of $\calG$,
 giving~\eqref{Eq:convex_i}. 
 Since $\bc$ lies in no face of $\calS$, the lifted point $(\bc,\lambda(\bc))$
 is below the upper hull of $P_\lambda$, which implies the middle 
 inequality of~\eqref{Eq:liftIneq} as $\widetilde{\lambda}(\bc)$ is the height of the
 point on the upper hull of $P_\lambda$ above $\bc$.

 Finally, as $\calG$ is a facet of $\calS$, its points affinely span $\R^d$, so there is an
 expression of $\bd$ as an affine combination of the points of $\calG$~\eqref{Eq:affine_bd}. 
 The graph of the function $\widetilde{\lambda}$ is the hyperplane supporting the 
 upper facet of the lifted polytope $P_\lambda$ corresponding to $\calG$.
 Then, if $\ba\not\in\calG$, we have $\lambda(\ba)<\widetilde{\lambda}(\ba)$, and
 the third inequality of~\eqref{Eq:liftIneq} is a special case of this.
\end{proof}
%%%%%%%%%%%%%%%%%%%%%%%%%%%%%%%%%%%%%%%%%%%%%%%%%%%%%%%%%%%%%%%%%%%%%%%%%%%%%%%%%

%%%%%%%%%%%%%%%%%%%%%%%%%%%%%%%%%%%%%%%%%%%%%%%%%%%%%%%%%%%%%%%%%%%%%%%%%%%%%%%%%%
\subsection{Secondary fan of a point configuration}

For a regular subdivision $\calS$ of a point configuration $\calA\subset\R^d$, let
$\defcolor{\sigma(\calS)}\subset\R^\calA$ be the (closure of) the set of all functions
$\lambda$ which induce $\calS$. 
This forms a cone in $\R^\calA$ which is full-dimensional if and only if $\calS$ is a
regular triangulation of $\calA$.
The collection of these cones forms the \demph{secondary fan $\Sigma_\calA$} of the point
configuration $\calA$.
Write \defcolor{$\calS_\sigma$} for the subdivision corresponding to a cone $\sigma$ of
the secondary fan. 
The minimal cone of $\Sigma_\calA$ is the linear space $\Aff(\calA)$, for adding an
affine function $\psi$ to a function $\lambda$ does not change the subdivision,
$\calS_\lambda=\calS_{\psi+\lambda}$, and elements of $\Aff(\calA)$ induce the trivial
subdivision of $\calA$ whose only facet is $\calA$.

A polyhedral subdivision $\calS$ of $\calA$  \demph{is refined} by another 
$\calS'$ (\defcolor{$\calS\prec\calS'$})
if for every face $\calF'$ of $\calS'$, there is a face $\calF$ of $\calS$ with
$\calF\supset\calF'$. 
This refinement poset is equal to the poset of the cones of the secondary fan under 
inclusion.
That is, $\calS_\sigma$ is refined by $\calS_\rho$ if and only if $\sigma$ 
is a face of $\rho$.
In particular, if $\{\ba_1,\dotsc,\ba_r\}$ is not a subset of any face of $\calS_\sigma$
then it is not a subset of any face of $\calS_\rho$ for any cone $\rho$ of the secondary
fan that contains $\sigma$.

%%%%%%%%%%%%%%%%%%%%%%%%%%%%%%%%%%%%%%%%%%%%%%%%%%%%%%%%%%%%%%%%%%%%%%%%%%%%%%%%%
\begin{example}\label{Ex:pentagon}
 Let $\calA=\{(0,0),(1,0),(1,1),(\frac{1}{2},\frac{3}{2}),(0,1)\}\subset\R^2$.
 Its convex hull is a pentagon.
\[
  \begin{picture}(84,64)(-18,-5)
   \put(7.5,-2){\includegraphics{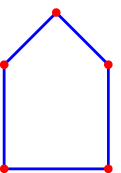}}
      \put(12,51){\footnotesize$(\frac{1}{2},\frac{3}{2})$}
   \put(-18,26){\footnotesize$(0,1)$}   \put(42,26){\footnotesize$(1,1)$}
   \put(-18,-4){\footnotesize$(0,0)$}   \put(42,-4){\footnotesize$(1,0)$}
  \end{picture}
\]
 Figure~\ref{F:allRegular} shows the poset of regular subdivisions of $\calA$.
 For each, it gives the corresponding polyhedral subdivision of $\Delta_\calA$ and
 functions $\lambda$ inducing the subdivision. 
 Working modulo $\Aff(\calA)$, we assume that a function $\lambda\in\R^\calA$ takes value
 zero at the three points where the second coordinate is positive.
 The parameter $r$ in the middle row is always positive.
%%%%%%%%%%%%%%%%%%%%%%%%%%%%%%%%%%%%%%%%%%%%%%%%%%%%%%%%%%%%%%%%%%%%%%%%%%%%%%%%%
\begin{figure}[htb]
%%%%%%%%%%%%%%%%%%%%%%%%%%%%%%%%%
  \begin{picture}(332,220)(-11,-2)
  %%%%%%%%%%%%%
   \put(140,175){\begin{picture}(39,44)(0,-5)
     \put(7.5,-2){\includegraphics{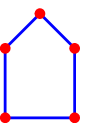}}
     \put(16.5,33){\footnotesize$0$}
     \put(1,17){\footnotesize$0$}   \put(32,17){\footnotesize$0$}
     \put(0,-5){\footnotesize$0$} \put(32,-5){\footnotesize$0$}
    \end{picture}}
  %%%%%%%%%%%%%
  %
  \put(19,145){\includegraphics{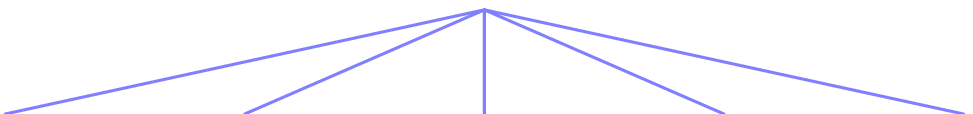}}
  %
  %%%%%%%%%%%%% rho5
   \put(0,97){\begin{picture}(39,44)(0,-5)
     \put(7.5,-2){\includegraphics{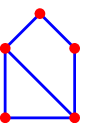}}
     \put(16.5,33){\footnotesize$0$}
     \put(1,17){\footnotesize$0$}   \put(32,17){\footnotesize$0$}
     \put(-6,-5){\footnotesize$-r$} \put(32,-5){\footnotesize$0$}
     \put(-11,10){\footnotesize$\rho_5$}
    \end{picture}}
  %%%%%%%%%%%%% rho1
   \put(70,97){\begin{picture}(39,44)(0,-5)
     \put(7.5,-2){\includegraphics{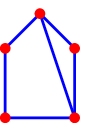}}
     \put(16.5,33){\footnotesize$0$}
     \put(1,17){\footnotesize$0$}   \put(32,17){\footnotesize$0$}
     \put(-1,-4){\footnotesize$\frac{r}{2}$} \put(32,-4){\footnotesize$r$}
     \put(-11,10){\footnotesize$\rho_1$}
    \end{picture}}
  %%%%%%%%%%%%% rho4
   \put(140,97){\begin{picture}(39,44)(0,-5)
     \put(7.5,-2){\includegraphics{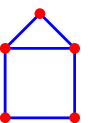}}
     \put(16.5,33){\footnotesize$0$}
     \put(1,17){\footnotesize$0$}   \put(32,17){\footnotesize$0$}
     \put(-6,-5){\footnotesize$-r$} \put(32,-5){\footnotesize$-r$}
     \put(-11,10){\footnotesize$\rho_4$}
    \end{picture}}
  %%%%%%%%%%%%% rho2
   \put(210,97){\begin{picture}(39,44)(0,-5)
     \put(7.5,-2){\includegraphics{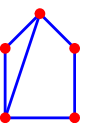}}
     \put(16.5,33){\footnotesize$0$}
     \put(1,17){\footnotesize$0$}   \put(32,17){\footnotesize$0$}
     \put(1,-4){\footnotesize$r$} \put(32,-4){\footnotesize$\frac{r}{2}$}
     \put(-11,10){\footnotesize$\rho_2$}
    \end{picture}}
  %%%%%%%%%%%%%  rho3
   \put(280,97){\begin{picture}(39,44)(0,-5)
     \put(7.5,-2){\includegraphics{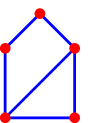}}
     \put(16.5,33){\footnotesize$0$}
     \put(1,17){\footnotesize$0$}   \put(32,17){\footnotesize$0$}
     \put(1,-5){\footnotesize$0$} \put(32,-5){\footnotesize$-r$}
     \put(-11,10){\footnotesize$\rho_3$}
    \end{picture}}
  %%%%%%%%%%%%%
  %
  \put(19,67){\includegraphics{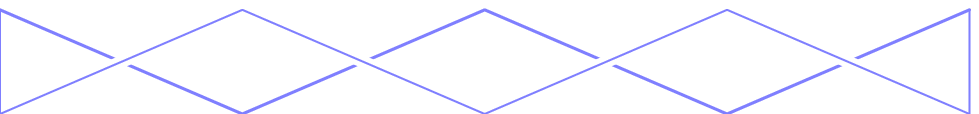}}
  %
  %%%%%%%%%%%%% tri5
   \put(0,0){\begin{picture}(39,64)(0,-25)
     \put(7.5,-2){\includegraphics{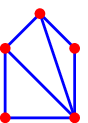}}
     \put(16.5,33){\footnotesize$0$}
     \put(1,17){\footnotesize$0$}   \put(32,17){\footnotesize$0$}
     \put(0,-5){\footnotesize$s$} \put(32,-5){\footnotesize$t$}
     \put(7,-15){\footnotesize$t>0$}
     \put(7,-25){\footnotesize$t>2s$}
     \put(-11,10){\footnotesize$\tau_5$}
    \end{picture}}
  %%%%%%%%%%%%% tri4
   \put(70,0){\begin{picture}(39,64)(0,-25)
     \put(7.5,-2){\includegraphics{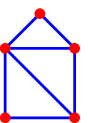}}
     \put(16.5,33){\footnotesize$0$}
     \put(1,17){\footnotesize$0$}   \put(32,17){\footnotesize$0$}
     \put(0,-5){\footnotesize$s$} \put(32,-5){\footnotesize$t$}
     \put(7,-15){\footnotesize$t<0$}
     \put(7,-25){\footnotesize$s<t$}
     \put(-11,10){\footnotesize$\tau_4$}
    \end{picture}}
  %%%%%%%%%%%%% tri1
   \put(140,0){\begin{picture}(39,64)(0,-25)
     \put(7.5,-2){\includegraphics{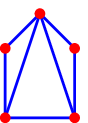}}
     \put(16.5,33){\footnotesize$0$}
     \put(1,17){\footnotesize$0$}   \put(32,17){\footnotesize$0$}
     \put(0,-5){\footnotesize$s$} \put(32,-5){\footnotesize$t$}
     \put(7,-15){\footnotesize$t>0$}
     \put(-7,-25){\footnotesize$2t>s>\frac{t}{2}$}
     \put(-11,10){\footnotesize$\tau_1$}
    \end{picture}}
  %%%%%%%%%%%%% tri3
   \put(210,0){\begin{picture}(39,64)(0,-25)
     \put(7.5,-2){\includegraphics{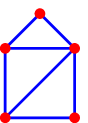}}
     \put(16.5,33){\footnotesize$0$}
     \put(1,17){\footnotesize$0$}   \put(32,17){\footnotesize$0$}
     \put(0,-5){\footnotesize$s$} \put(32,-5){\footnotesize$t$}
     \put(7,-15){\footnotesize$s<0$}
     \put(7,-25){\footnotesize$t<s$}
     \put(-11,10){\footnotesize$\tau_3$}
    \end{picture}}
  %%%%%%%%%%%%% tri2
   \put(280,0){\begin{picture}(39,64)(0,-25)
     \put(7.5,-2){\includegraphics{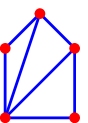}}
     \put(16.5,33){\footnotesize$0$}
     \put(1,17){\footnotesize$0$}   \put(32,17){\footnotesize$0$}
     \put(0,-5){\footnotesize$s$} \put(32,-5){\footnotesize$t$}
     \put(7,-15){\footnotesize$s>0$}
     \put(7,-25){\footnotesize$s>2t$}
     \put(-11,10){\footnotesize$\tau_2$}
    \end{picture}}
  %%%%%%%%%%%%%
  \end{picture}
  \caption{Poset of regular subdivisions of $\calA$.}
  \label{F:allRegular}
 \end{figure}
%%%%%%%%%%%%%%%%%%%%%%%%
Working modulo $\Aff(\calA)$ (using these parameters),
the secondary fan of $\calA$ is shown in
Figure~\ref{F:SecFan}. 
%%%%%%%%%%%%%%%%%%%%%%%%%%%%%%%%%%%%%%%%%%%%%%%%%%%%%%%%%%%%%%%%%%%%%%%%%%%%%%%%%
\begin{figure}
 \begin{picture}(95,90)(-47,-45)
  \put(-41,-41){\includegraphics{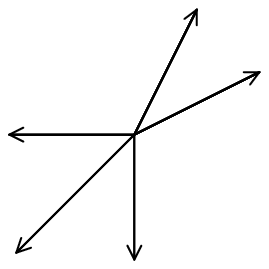}}
  \put(-47,-1){$\rho_5$}  \put(17,40){$\rho_1$}
  \put(-3.5,-43){$\rho_3$}  \put(38,17){$\rho_2$}
  \put(-43,-40){$\rho_4$}
  \put(-17,11){$\tau_5$}  \put(14,16){$\tau_1$}
  \put(-15,-24){$\tau_3$}  \put(10,-11){$\tau_2$}
  \put(-28,-11){$\tau_4$}
  \end{picture}
 \caption{Secondary fan of $\calA$.}
 \label{F:SecFan}
\end{figure}

\end{example}
%%%%%%%%%%%%%%%%%%%%%%%%%%%%%%%%%%%%%%%%%%%%%%%%%%%%%%%%%%%%%%%%%%%%%%%%%%%%%%%%%

%%%%%%%%%%%%%%%%%%%%%%%%%%%%%%%%%%%%%%%%%%%%%%%%%%%%%%%%%%%%%%%%%%%%%%%%
\section{Real irrational toric varieties}

Let $\calA\subset\R^d$ be a finite set of vectors.
We do not assume that $\calA$ affinely spans $\R^d$.
The \demph{$\calA$-simplex} $\bsimplex^\calA\subset\R^\calA_\geq$ is the convex hull
of the standard basis vectors  $\{e_{\ba}\mid \ba\in\calA\}$ in $\R^\calA$ i.e.,
\[
   \simplex^\calA\ :=\ 
     \biggl\{z\in\R^\calA_\geq \mid \sum_{\ba\in\calA} z_{\ba}=1\biggr\}\,.
\]
It is convenient to represent points of $\simplex^\calA$ using homogeneous coordinates
$[z_{\ba}\mid \ba\in\calA]$ where each $z_{\ba}\geq 0$, not all coordinates are equal to
zero, and we have 
\[
   [z_{\ba}\mid \ba\in\calA]\ =\ 
   [\gamma z_{\ba}\mid \ba\in\calA]\qquad\mbox{for all\ }\gamma\in\R_>\,.
\]
The real torus $\R^\calA_>$ acts on $\simplex^\calA$ where for 
$w\in\R^\calA_>$ and $z\in\simplex^\calA$, we have
\[
   w.z\ :=\ [w_{\ba}\cdot z_{\ba} \mid \ba\in\calA]\,.
\]
When $\calF\subset\calA$, the simplex $\simplex^\calF$ is a face of
$\simplex^\calA$ and all faces of $\simplex^\calA$ arise in this way.
The restriction map from $\R^\calA$ to $\R^\calF$ given by $z\mapsto z|_\calF$ induces a rational map 
$\pi_\calF\colon\simplex^\calA\dasharrow\simplex^\calF$, which is undefined on 
$\simplex^{\calA\smallsetminus\calF}$.
On the remainder $\simplex^\calA\smallsetminus\simplex^{\calA\smallsetminus\calF}$, we restrict $z\in\R^\calA$ to $z|_\calF\in\R^\calF$ and
then rescale $z|_\calF$ to obtain a point in the simplex $\simplex^\calF$.

The simplex $\simplex^\calA$ is a compact metric space where we measure distance with the
$\ell_1$-metric from $\R^\calA$.
That is, if $y,z\in\simplex^\calA$, then 
\[
   \defcolor{d(y,z)}\ :=\ \sum_{\ba\in\calA} |y_\ba-z_\ba|\,.
\]

%%%%%%%%%%%%%%%%%%%%%%%%%%%%%%%%%%%%%%%%%%%%%%%%%%%%%%%%%%%%%%%%%%%%%%%%%%%%%%%%%
\begin{lemma}\label{L:ell_1}
 Suppose that $\calF\subset\calA$ and 
 $z\in\simplex^\calA\smallsetminus\simplex^{\calA\smallsetminus\calF}$, so that the
 projection $\pi_\calF(z)$ to $\simplex^\calF$ is defined.
 Then
\[
   d(z,\pi_\calF(z))\ =\ 2\sum_{\ba\in\calA\smallsetminus\calF} z_\ba\,.
\]
\end{lemma}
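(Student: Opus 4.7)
The plan is a direct $\ell_1$ computation, splitting the sum defining $d(z,\pi_\calF(z))$ over the two complementary subsets $\calF$ and $\calA\smallsetminus\calF$.

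First, I would set $s := \sum_{\ba\in\calF} z_\ba$. The hypothesis $z\in\simplex^\calA\smallsetminus\simplex^{\calA\smallsetminus\calF}$ means that at least one coordinate indexed by $\calF$ is nonzero, so $s>0$ and the projection is well-defined. Since the coordinates of $z$ sum to $1$, we also get $1-s = \sum_{\ba\in\calA\smallsetminus\calF} z_\ba$. The rescaled restriction is
\[
   \pi_\calF(z)_\ba \;=\; \frac{z_\ba}{s} \quad \text{for } \ba\in\calF,
\]
and I would view $\pi_\calF(z)$ as an element of $\simplex^\calA$ via the inclusion $\simplex^\calF\hookrightarrow\simplex^\calA$ extending by zero on $\calA\smallsetminus\calF$, so that the $\ell_1$-distance in $\R^\calA$ makes literal sense.

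Next I would compute the two pieces of $d(z,\pi_\calF(z)) = \sum_{\ba\in\calA} |z_\ba - \pi_\calF(z)_\ba|$ separately. On $\calA\smallsetminus\calF$ the contribution is simply $\sum_{\ba\in\calA\smallsetminus\calF} z_\ba = 1-s$. On $\calF$, since $s\le 1$ we have $1/s \ge 1$, hence
\[
   \sum_{\ba\in\calF} \bigl| z_\ba - z_\ba/s \bigr|
   \;=\; \Bigl(\tfrac{1}{s}-1\Bigr)\sum_{\ba\in\calF} z_\ba
   \;=\; \Bigl(\tfrac{1}{s}-1\Bigr)\cdot s \;=\; 1-s.
\]
Adding the two pieces yields $2(1-s) = 2\sum_{\ba\in\calA\smallsetminus\calF} z_\ba$, which is the claimed identity.

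There is essentially no obstacle here; the only minor subtlety is bookkeeping to ensure that $\pi_\calF(z)$ and $z$ are compared as points of the same ambient $\R^\calA$ (using the extension-by-zero embedding of $\simplex^\calF$ into $\simplex^\calA$), which is precisely the identification implicit in the definition of $\pi_\calF$ given earlier in the section. Once that is fixed, the lemma reduces to the short calculation above.
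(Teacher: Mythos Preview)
Your proof is correct and follows essentially the same approach as the paper: both split the $\ell_1$-sum over $\calF$ and $\calA\smallsetminus\calF$, use that $\pi_\calF(z)$ is the rescaled restriction (extended by zero), and simplify using $\sum_{\ba\in\calA}z_\ba=1$. Your introduction of $s=\sum_{\ba\in\calF}z_\ba$ makes the bookkeeping slightly cleaner, but the argument is the same.
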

%%%%%%%%%%%%%%%%%%%%%%%%%%%%%%%%%%%%%%%%%%%%%%%%%%%%%%%%%%%%%%%%%%%%%%%%%%%%%%%%%

%%%%%%%%%%%%%%%%%%%%%%%%%%%%%%%%%%%%%%%%%%%%%%%%%%%%%%%%%%%%%%%%%%%%%%%%%%%%%%%%%
\begin{proof}
 Set $y:=\pi_\calF(z)$, which is obtained by restricting $z\in\R^\calA$ to $z|_\calF\in\R^\calF$
 and then scaling to obtain a point in the simplex $\simplex^\calF$.
 That is,
\[
    y_\ba\ =\ \left\{
     \begin{array}{ccl} 0&\mbox{if}&\ba\not\in\calF\\
        \dfrac{z_\ba}{\sum_{\fb\in\calF} z_\fb}\ &\mbox{if}& \ba\in\calF
     \end{array}\right.\ 
\]
 Note that if $\fb\in\calF$ then $y_\fb\geq z_\fb$.
 Thus,
 \begin{eqnarray*}
  d(y,z) &=& \sum_{\ba\in\calA\smallsetminus\calF} z_\ba\ +\ 
             \sum_{\ba\in\calF}\left(\frac{z_\ba}{\sum_{\fb\in\calF} z_\fb} - z_\ba\right)\\
   &=& \sum_{\ba\in\calA\smallsetminus\calF} z_\ba\ +\ 1\ -\ 
       \sum_{\ba\in\calF}z_\ba
   \ =\ 2\sum_{\ba\in\calA\smallsetminus\calF} z_\ba\,,
 \end{eqnarray*}
 as $1=\sum_{\ba\in\calA\smallsetminus\calF} z_\ba +\sum_{\ba\in\calF}z_\ba$.
\end{proof}
%%%%%%%%%%%%%%%%%%%%%%%%%%%%%%%%%%%%%%%%%%%%%%%%%%%%%%%%%%%%%%%%%%%%%%%%%%%%%%%%%

%%%%%%%%%%%%%%%%%%%%%%%%%%%%%%%%%%%%%%%%%%%%%%%%%%%%%%%%%%%%%%%%%%%%%%%%%%%%%%%%%
\subsection{Hausdorff distance}\label{S:HD}
The \demph{Hausdorff distance} beween closed subsets
$X,Y\subset\simplex^\calA$ is 
\[
   d_H(X,Y)\ :=\ 
    \max\biggl\{  \sup_{x\in X}\inf_{y\in Y} d(x,y)\,,\ 
             \sup_{y\in Y}\inf_{x\in X} d(x,y) \biggr\}\,.
\]
This endows the set of closed subsets of $\simplex^\calA$ with the structure of a complete
metric space, and the corresponding metric topology is the \demph{Hausdorff topology}.
If we have a sequence $\{X_i\mid i\in\N\}$ of subsets of $\simplex^\calA$, then 
\[
   \lim_{i\to \infty} X_i\ =\ X
\]
if and only if $X$ contains all accumulation points of the sequence $\{X_i\mid i\in\N\}$,
and each point of $X$ is a limit point of the sequence. 

%%%%%%%%%%%%%%%%%%%%%%%%%%%%%%%%%%%%%%%%%%%%%%%%%%%%%%%%%%%%%%%%%%%%%%%%
\subsection{Real irrational toric varieties}

For $x\in\R_>$ and $a\in\R$, set $\defcolor{x^a}:=\exp(a\log(x))$.
For $x\in\R^d_>$ and $\ba\in\R^d$, we have the monomial
$\defcolor{x^{\ba}}:=x_1^{\ba_1}\dotsb x_d^{\ba_d}$.
The points $\calA\subset\R^d$ define a map
 \begin{equation}\label{Eq:parametrization}
   \defcolor{\varphi_\calA}\ \colon\ \R^d_>\ \longrightarrow\ \simplex^\calA
   \qquad\mbox{where}\qquad 
    \varphi_\calA(x)\ =\ [x^{\ba}\mid \ba\in\calA]\,.
 \end{equation}
The \demph{real irrational toric variety $X_\calA$} is the closure of the image
\defcolor{$X_\calA^\circ$} of $\varphi_\calA$ in $\simplex^\calA$.

The convex hull \defcolor{$\Delta_\calA$} of $\calA$ 
is the image of $\simplex^\calA$ under the map 
$\defcolor{\taut_\calA}\colon\R^\calA \to \R^d$ defined by
\[
   \taut_\calA\ \colon\ 
    (z_{\ba}\mid \ba\in\calA)\ \longmapsto\ \sum_{\ba\in\calA} z_\ba \ba\ .
\]
The following theorem of Birch from algebraic statistics~\cite[p.~168]{Agr90} 
identifies $X_\calA$ with $\Delta_\calA$.

%%%%%%%%%%%%%%%%%%%%%%%%%%%%%%%%%%%%%%%%%%%%%%%%%%%%%%%%%%%%%%%%
\begin{theorem}
  The restriction of $\taut_\calA$ to $X_\calA$ is a homeomorphism
 $\taut_\calA\colon X_\calA\to\Delta_\calA$.
\end{theorem}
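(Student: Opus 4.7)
My plan is to pass to logarithmic coordinates $u_k=\log x_k$ on $\R^d_>$ and recognize the composition $\taut_\calA\circ\varphi_\calA$ as the gradient of the convex \emph{log-partition function}
\[
   F(u)\ :=\ \log\sum_{\ba\in\calA} e^{\langle \ba,u\rangle},
\]
at which point Birch's theorem becomes a standard statement in convex analysis. Differentiating, $\nabla F(u)=\sum_\ba z_\ba\,\ba$ where $z_\ba:=e^{\langle \ba,u\rangle}/\sum_\bb e^{\langle \bb,u\rangle}$, so $\nabla F$ is literally $\taut_\calA\circ\varphi_\calA$ expressed in log coordinates. The Hessian of $F$ is the covariance matrix of $\calA$ under the probability vector $z$, so it is positive semidefinite with kernel equal to the lineality space
\[
   L\ :=\ \{v\in\R^d\mid\langle \ba-\bb,v\rangle=0\ \text{for all}\ \ba,\bb\in\calA\}.
\]
Two points $x,x'\in\R^d_>$ satisfy $\varphi_\calA(x)=\varphi_\calA(x')$ exactly when $\log x-\log x'\in L$, so it suffices to show that $\nabla F$ descends to a homeomorphism $\R^d/L\to\mathrm{relint}(\Delta_\calA)$ and then extend over the boundary.

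For the interior bijection, fix $p\in\mathrm{relint}(\Delta_\calA)$ and set $G_p(u):=F(u)-\langle p,u\rangle$; this is strictly convex modulo $L$. I would verify coercivity on $\R^d/L$ as follows. For $v\notin L$, the maximum $m(v):=\max_{\ba\in\calA}\langle \ba,v\rangle$ is attained only on a proper subset $\calA_v\subsetneq\calA$, whose convex hull is a proper face of $\Delta_\calA$; since $p$ lies in the relative interior, $p\notin\conv(\calA_v)$ and therefore $\langle p,v\rangle<m(v)$. A direct estimate gives $F(tv)=tm(v)+O(1)$ as $t\to+\infty$, so $G_p(tv)\to+\infty$. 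Thus $G_p$ attains its minimum modulo $L$, where $\nabla F(u)=p$; strict convexity forces uniqueness. Combined with the fiber description of $\varphi_\calA$, this shows $\taut_\calA$ is a continuous bijection $X_\calA^\circ\to\mathrm{relint}(\Delta_\calA)$.

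To extend to all of $X_\calA$, I stratify $\Delta_\calA$ by its faces. For each face $\calF$ of $\calA$ (the subset of $\calA$ lying on a face of $\Delta_\calA$), $\simplex^\calF$ is a closed face of $\simplex^\calA$ and I claim $X_\calA\cap\simplex^\calF=X_\calF$: a convergent sequence $\varphi_\calA(x_i)\to y\in\simplex^\calF$ forces all mass to concentrate on the $\calF$-coordinates, which happens exactly when $\log x_i$ diverges along a direction whose max-support in $\calA$ equals $\calF$, and such limits are parametrized by $\varphi_\calF$. Inducting on $\dim\Delta_\calA$, each restricted map $\taut_\calF\colon X_\calF\to\Delta_\calF$ is a homeomorphism, and these assemble with the interior bijection to a continuous bijection $\taut_\calA\colon X_\calA\to\Delta_\calA$. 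Since $X_\calA$ is closed in the compact simplex $\simplex^\calA$ and $\Delta_\calA$ is Hausdorff, this bijection is automatically a homeomorphism.

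The main obstacle is the coercivity step: the hypothesis $p\in\mathrm{relint}(\Delta_\calA)$ is used in exactly one place, namely to guarantee $\langle p,v\rangle<m(v)$ for every $v\notin L$, and this separation from the boundary case is the analytic heart of Birch's theorem. The only other delicate point is the face identification $X_\calA\cap\simplex^\calF=X_\calF$ in the boundary step, which is essentially a special case of the toric-degeneration analysis developed later in the paper.
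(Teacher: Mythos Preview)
The paper does not prove this theorem at all: it is stated as Birch's Theorem and attributed to Agresti~\cite[p.~168]{Agr90}, with no argument given. So there is no ``paper's own proof'' to compare against; your proposal supplies what the paper deliberately outsources to the literature.

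Your approach via the log-partition function $F(u)=\log\sum_{\ba}e^{\langle\ba,u\rangle}$ is the standard modern route to Birch's theorem and is essentially correct. A couple of points deserve tightening. First, in the coercivity step you show $G_p(tv)\to+\infty$ along each ray $v\notin L$, but existence of a minimizer requires genuine coercivity on $\R^d/L$; you should invoke the recession-function criterion for convex functions (the recession function of $G_p$ at $v$ is $m(v)-\langle p,v\rangle>0$ for $v\notin L$, and positivity of the recession function on the unit sphere forces coercivity). Second, your boundary identification $X_\calA\cap\simplex^\calF=X_\calF$ is correct but your justification is loose; the clean argument for the nontrivial inclusion is that the projection $\pi_\calF$ sends $X_\calA^\circ$ onto $X_\calF^\circ$, so any limit point of $X_\calA^\circ$ lying in $\simplex^\calF$ is already a limit of points of $X_\calF^\circ$. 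Note also that the paper states this face identification \emph{after} Birch's theorem as a consequence of it, so you are right to give an independent argument rather than cite the paper---otherwise the reasoning would be circular.
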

%%%%%%%%%%%%%%%%%%%%%%%%%%%%%%%%%%%%%%%%%%%%%%%%%%%%%%%%%%%%%%%%

In particular this shows that the toric variety $X_\calA$ has dimension equal
to the dimension of the convex hull of $\calA$.
We call this restriction of $\taut_\calA$ to $X_\calA$ the 
\demph{algebraic moment map}.

Homogeneous equations for $X_\calA$ were described in~\cite[Prop.~B.3]{CGS} as follows.
For every affine relation among the points of $\calA$ with nonnegative coefficients
 \begin{equation}\label{Eq:N-relation}
   \sum_{\ba\in\calA} \alpha_\ba \ba\ =\ \sum_{\ba\in\calA} \beta_\ba \ba\,
    \qquad\mbox{where}\qquad
   \sum_{\ba\in\calA}\alpha_\ba \ =\ \sum_{\ba\in\calA} \beta_\ba\,,
 \end{equation}
with $\alpha_\ba,\beta_\ba\in\R_>$,
we have the valid equation for points $z\in X_{\calA}$,
 \begin{equation}\label{Eq:poly-relation}
   \prod_{\ba\in\calA}z_\ba^{\alpha_\ba}\ =\
   \prod_{\ba\in\calA}z_\ba^{\beta_\ba}\,.
 \end{equation}
Conversely, if $z\in\simplex^\calA$ satisfies equation~\eqref{Eq:poly-relation} for
every affine relation~\eqref{Eq:N-relation}, then $z\in X_{\calA}$.

Given a point $w=(w_\ba\mid \ba\in\calA)\in \R_>^\calA$ of the real torus, we have the
translated toric variety $\defcolor{X_{\calA,w}}:=w.X_{\calA}$, which is the closure of
$\defcolor{X^\circ_{\calA,w}}:=w.X^\circ_\calA$.
Birch's Theorem still holds for $X_{\calA,w}$; it is mapped homeomorphically to
$\Delta_\calA$ by the algebraic moment map $\taut_\calA$. 
We have the following description of the equations for $X_{\calA,w}$. 

%%%%%%%%%%%%%%%%%%%%%%%%%%%%%%%%%%%%%%%%%%%%%%%%%%%%%%%%
\begin{proposition}\label{P:equations}
  A point $z\in \simplex^\calA$ lies in $X_{\calA,w}$ if and only if
 \[
   \prod_{\ba\in\calA}z_\ba^{\alpha_\ba} \cdot \prod_{\ba\in\calA}w_\ba^{\beta_\ba}
   \ =\
   \prod_{\ba\in\calA}z_\ba^{\beta_\ba} \cdot \prod_{\ba\in\calA}w_\ba^{\alpha_\ba}\,,
 \]
 for every affine relation~\eqref{Eq:N-relation} among the points of $\calA$.
 On $X^\circ_{\calA,w}$, we additionally have such equations coming from affine
 relations~\eqref{Eq:N-relation} where the numbers $\alpha_\ba,\beta_\ba$ are allowed to be
 negative. 
\end{proposition}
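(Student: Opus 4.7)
The plan is to reduce Proposition~\ref{P:equations} to the characterization of $X_\calA$ by the equations~\eqref{Eq:poly-relation} quoted from~\cite[Prop.~B.3]{CGS} immediately before the proposition statement. The key observation is that by definition $X_{\calA,w}=w.X_\calA$, so $z\in X_{\calA,w}$ if and only if $y:=w^{-1}.z\in X_\calA$, where in homogeneous coordinates $y_\ba=z_\ba/w_\ba$ (well-defined since each $w_\ba>0$, and the zero locus of $z$ is preserved).

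First I would apply the cited characterization to $y$: we have $y\in X_\calA$ if and only if $\prod_\ba y_\ba^{\alpha_\ba}=\prod_\ba y_\ba^{\beta_\ba}$ for every affine relation~\eqref{Eq:N-relation} with nonnegative coefficients. Substituting $y_\ba=z_\ba/w_\ba$ gives
\[
   \frac{\prod_\ba z_\ba^{\alpha_\ba}}{\prod_\ba w_\ba^{\alpha_\ba}}
   \ =\
   \frac{\prod_\ba z_\ba^{\beta_\ba}}{\prod_\ba w_\ba^{\beta_\ba}}\,,
\]
which after clearing denominators is exactly the stated equation. The substitution is unambiguous in homogeneous coordinates precisely because the affine relation condition $\sum_\ba\alpha_\ba=\sum_\ba\beta_\ba$ forces any common scalar factor (from a different homogeneous representative for $y$) to cancel between the two sides; this is where the affine (rather than merely linear) nature of the relation enters. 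Reversing the substitution gives the converse, so we obtain the stated biconditional.

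For the supplementary claim about $X^\circ_{\calA,w}$: on this open locus every coordinate $z_\ba$ is strictly positive (it is $w\cdot\varphi_\calA(\R^d_>)$), so $z_\ba^\gamma=\exp(\gamma\log z_\ba)$ is defined for arbitrary $\gamma\in\R$. The same identification with $X^\circ_\calA$ via $y=w^{-1}.z$, together with the parametrization~\eqref{Eq:parametrization} (which shows that $y\in X^\circ_\calA$ satisfies $\prod y_\ba^{\alpha_\ba}=\prod y_\ba^{\beta_\ba}$ for any affine relation with real coefficients, because $(\prod x^{\alpha_\ba\ba})=(\prod x^{\beta_\ba\ba})$ from $\sum\alpha_\ba\ba=\sum\beta_\ba\ba$), yields the extended family of equations after the same cross-multiplication.

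I do not foresee a substantive obstacle: the argument is essentially bookkeeping once one makes the torus-equivariance observation $X_{\calA,w}=w.X_\calA$. The only subtlety requiring care is the passage between the two homogeneous coordinates for $y$ and $z$, which is handled cleanly by invoking the defining identity $\sum_\ba\alpha_\ba=\sum_\ba\beta_\ba$ of an affine relation; and the passage from $X^\circ_{\calA,w}$ to its closure $X_{\calA,w}$, which is automatic because the equations define a closed subset of $\simplex^\calA$ and the bijection $z\leftrightarrow y$ carries closures to closures.
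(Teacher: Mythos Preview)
Your argument is correct and is precisely the intended one: the paper does not give an explicit proof of this proposition, treating it as an immediate consequence of the characterization~\eqref{Eq:poly-relation} of $X_\calA$ quoted just before, via the substitution $y=w^{-1}.z$. Your write-up simply makes that implicit step explicit, including the two points that deserve a sentence of care (homogeneity via $\sum\alpha_\ba=\sum\beta_\ba$, and positivity of coordinates on the open locus for real exponents).
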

%%%%%%%%%%%%%%%%%%%%%%%%%%%%%%%%%%%%%%%%%%%%%%%%%%%%%%%%

The real torus $\R^\calA_>$ acts on $X_\calA$ with the stabilizer of $X_\calA$
the image of $\R^{d+1}_>$ in $\R^\calA_>$ under the map
\[
   (t_0,t_1,\dotsc,t_d)\ \mapsto (t_0 t^\ba\mid \ba\in\calA)\,.
\]
Under the coordinatewise logarithm map $\defcolor{\Log}\colon\R^\calA_>\to\R^\calA$, this
stabilizer subgroup is mapped to the subspace $\Aff(\calA)$ of affine functions on
$\calA$. 

%%%%%%%%%%%%%%%%%%%%%%%%%%%%%%%%%%%%%%%%%%%%%%%%%%%%%%%%%%%%%%%%%%%%%%%%%%%%%%%%%
\begin{lemma}\label{L:stabilizer}
For $w,w'\in\R^\calA_>$,  $X_{\calA,w}=X_{\calA,w'}$ if and only if 
 $\Log(w)-\Log(w')\in\Aff(\calA)$.
\end{lemma}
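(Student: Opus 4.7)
The plan is to reduce the lemma to a statement about the stabilizer of $X_\calA$ and then verify both implications. Setting $s := (w_\ba/w'_\ba \mid \ba \in \calA) \in \R^\calA_>$, commutativity of the torus gives $X_{\calA,w} = s \cdot X_{\calA,w'}$, so $X_{\calA,w} = X_{\calA,w'}$ is equivalent to $s \cdot X_{\calA,w'} = X_{\calA,w'}$, which (since $w'$ is invertible in the torus) is in turn equivalent to $s \cdot X_\calA = X_\calA$. Because $\Log(s) = \Log(w) - \Log(w')$, it suffices to show that $s$ stabilizes $X_\calA$ if and only if $\Log(s) \in \Aff(\calA)$.

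For the ($\Leftarrow$) direction, I would pick $c_0 \in \R$ and $c = (c_1,\dotsc,c_d) \in \R^d$ so that $\log s_\ba = c_0 + c \cdot \ba$ for every $\ba \in \calA$, then set $t_0 := e^{c_0}$ and $t := (e^{c_1},\dotsc,e^{c_d}) \in \R^d_>$, giving $s_\ba = t_0 \cdot t^\ba$. A direct computation in homogeneous coordinates yields
\[
  s \cdot \varphi_\calA(x) \;=\; [\,t_0\, (tx)^\ba \mid \ba\in\calA\,] \;=\; [\,(tx)^\ba \mid \ba\in\calA\,] \;=\; \varphi_\calA(tx)
\]
for every $x\in\R^d_>$, the common factor $t_0$ being absorbed by the homogenization. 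Hence $s \cdot X_\calA^\circ = X_\calA^\circ$, and taking closures in $\simplex^\calA$ gives $s \cdot X_\calA = X_\calA$.

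For the ($\Rightarrow$) direction, I would suppose $s \cdot X_\calA = X_\calA$ and evaluate at the distinguished point $p := \varphi_\calA(1,\dotsc,1) = [\,1 \mid \ba\in\calA\,] \in X_\calA^\circ$. Then $s \cdot p = [\,s_\ba \mid \ba\in\calA\,]$ belongs to $X_\calA$ and has every coordinate strictly positive. The key step is to observe that the intersection of $X_\calA$ with the open stratum $\{z \in \simplex^\calA \mid z_\ba > 0 \text{ for all } \ba\}$ is exactly $X_\calA^\circ$: by Birch's Theorem, $\taut_\calA$ identifies $X_\calA$ with $\Delta_\calA$, any point of $X_\calA$ with all coordinates positive maps to a strictly positive convex combination of $\calA$ and hence to the relative interior of $\Delta_\calA$, and the parametrization $\varphi_\calA$ already surjects onto this preimage. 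Therefore $s \cdot p = \varphi_\calA(x)$ for some $x \in \R^d_>$, and comparing homogeneous coordinates produces a scalar $\gamma > 0$ with $s_\ba = \gamma x^\ba$ for every $\ba \in \calA$. Taking logarithms yields $\log s_\ba = \log \gamma + \ba \cdot \log x$, which is the restriction of an affine function on $\R^d$ to $\calA$, so $\Log(s) \in \Aff(\calA)$.

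The only delicate point is the identification $X_\calA \cap (\text{open stratum}) = X_\calA^\circ$ invoked above, which is essentially the exponential-family content of Birch's Theorem. If one prefers to avoid this, the same conclusion can be reached using Proposition~\ref{P:equations}: on the open stratum, the nonnegative-exponent binomial relations cutting out $X_\calA$ can be rearranged into the more general real-exponent relations cutting out $X_\calA^\circ$, so the two sets coincide there.
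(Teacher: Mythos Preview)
Your proof is correct. The paper does not give an explicit proof of this lemma: it states it as an immediate consequence of the preceding paragraph, which identifies the stabilizer of $X_\calA$ in $\R^\calA_>$ with the image of $(t_0,t)\mapsto(t_0 t^{\ba}\mid\ba\in\calA)$ and observes that $\Log$ carries this subgroup onto $\Aff(\calA)$. Your reduction to ``$s$ stabilizes $X_\calA$ iff $\Log(s)\in\Aff(\calA)$'' is exactly this identification, and you then supply the details the paper omits. The $(\Leftarrow)$ direction is the computation $s\cdot\varphi_\calA(x)=\varphi_\calA(tx)$, which is precisely the map displayed before the lemma. For the $(\Rightarrow)$ direction your argument via the point $p=[1\mid\ba\in\calA]$ is sound; the ``delicate point'' that $X_\calA$ meets the open stratum of $\simplex^\calA$ exactly in $X_\calA^\circ$ is contained in the orbit decomposition~\eqref{Eq:disjont_deomposition} (the boundary pieces $X^\circ_{\calF}$ for proper faces $\calF$ lie in $\simplex^\calF$ and hence have some coordinate equal to zero), so either of your two justifications is consistent with the paper.
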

%%%%%%%%%%%%%%%%%%%%%%%%%%%%%%%%%%%%%%%%%%%%%%%%%%%%%%%%%%%%%%%%%%%%%%%%%%%%%%%%%

The toric variety $X_\calF\subset\simplex^\calF$ is the image of the toric variety
$X_\calA$ under the map $\pi_\calF$.
This can be seen either from the definition~\eqref{Eq:parametrization} or from the
equations~\eqref{Eq:poly-relation} for $X_\calA$.
Likewise, if $w\in\R^\calA_>$ and $w|_\calF$ is its restriction to $\calF$,
then $\pi_\calF(X_{\calA,w})= X_{\calF,w|_\calF}$.
We write $X_{\calF,w}$ for $X_{\calF,w|_\calF}$ to simplify notation.
When $\Delta_\calF$ has dimension $d$, the map 
$\pi_\calF\colon X^\circ_{\calA,w}\to X^\circ_{\calF,w}$ is a bijection.

A consequence of the properties of $\taut_\calA$ is a
description of the boundary of $X_{\calA,w}$. 
Let $F$ be a face of the polytope $\Delta_\calA$ and 
$\calF = \calA\cap F$ be the points of $\calA$ lying on $F$,  also called a
\demph{face} of $\calA$.
Then the toric variety $X_{\calF,w}$ equals $X_{\calA,w}\cap\simplex^\calF$.
The collection of toric varieties $X_{\calF,w}$ where $F$ ranges over the 
faces of
$\Delta_\calA$ forms the boundary of $X_{\calA,w}$.
We have the decomposition of $X_{\calA,w}$ into disjoint subsets,
 \begin{equation}\label{Eq:disjont_deomposition}
   X_{\calA,w}\ =\ \bigsqcup_{\calF} X^\circ_{\calF,w}\,,
 \end{equation}
where $\calF$ ranges over all faces of $\calA$.
Each set $X^\circ_{\calF,w}$ is an orbit of $\R^\calA_>$ acting on $X_{\calA,w}$.

%%%%%%%%%%%%%%%%%%%%%%%%%%%%%%%%%%%%%%%%%%%%%%%%%%%%%%%%%%%%%%%%%%%%%%%%%%%%%%%%%
\section{Toric degenerations of real irrational toric varieties}\label{translates and toric degenerations}

We describe all limits of the toric variety $X_\calA$ under cosets of one-parameter subgroups of
$\R^\calA_>$, called \demph{toric degenerations}. 
Each limit is a complex of toric varieties supported on a union of faces of
$\simplex^\calA$ that is the geometric realization of a regular subdivision of
$\calA$.
Its proof follows that of Theorem~A.1 in~\cite{GSZ} which was
for the case when $\calA\subset\Z^d$.

%%%%%%%%%%%%%%%%%%%%%%%%%%%%%%%%%%%%%%%%%%%%%%%%%%%%%%%%%%%%%%%%%%%%%%%%%%%%%%%%%
\subsection{Complexes of toric varieties}
Let $\calS$ be a polyhedral subdivision of $\calA$.
The \demph{geometric realization $|\calS|$} of $\calS$ is the  union
\[
  \defcolor{|\calS|}\ :=\  \bigcup_{\calF\mbox{\scriptsize\ a face of }\calS} \simplex^\calF
\]
of faces of $\simplex^\calA$ corresponding to faces of the subdivision $\calS$.
The following is standard, it holds for more general simplicial complexes on $\calA$
(see for instance \cite[Chapter 1]{MSt}).

%%%%%%%%%%%%%%%%%%%%%%%%%%%%%%%%%%%%%%%%%%%%%%%%%%%%%%%%%%%%%%%%%%%%%%%%%%%%%%%%%
\begin{proposition}\label{P:geometric_realization}
 The geometric realization $|\calS|$ of a polyhedral subdivision $\calS$ of $\calA$
 is defined as an analytic subset of $\simplex^\calA$ 
 by the vanishing of the monomials 
\[
  \{ z_\ba z_\bb\,\mid\, \mbox{$\{\ba,\bb\}$ is not a subset of any face of  $\calS$}\}
    \ \bigcup\ 
  \{ z_\bc\,\mid\, \mbox{$\bc$ lies in no face of $\calS$}\}\,.
\]
\end{proposition}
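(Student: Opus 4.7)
My plan is to prove the stated identity by establishing both inclusions between $|\calS|$ and the analytic subset $V\subseteq\simplex^\calA$ cut out by the listed monomials.

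The inclusion $|\calS|\subseteq V$ is essentially formal. If $z\in\simplex^\calF$ for some face $\calF$ of $\calS$, then $z_\ba=0$ for every $\ba\in\calA\setminus\calF$. Hence when $\bc$ lies in no face of $\calS$ we have $\bc\notin\calF$ and $z_\bc=0$; and when $\{\ba,\bb\}$ lies in no face of $\calS$ we have $\{\ba,\bb\}\not\subseteq\calF$, so at least one of $z_\ba,z_\bb$ vanishes and $z_\ba z_\bb=0$. Taking the union over all faces $\calF$ of $\calS$ yields $|\calS|\subseteq V$.

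For the reverse inclusion $V\subseteq|\calS|$, I would take $z\in V$ and let $\calT:=\{\ba\in\calA\mid z_\ba\neq 0\}$ be its support. The contrapositives of the vanishing hypotheses translate directly into combinatorial statements about $\calT$: every $\ba\in\calT$ is contained in some face of $\calS$, and every pair $\{\ba,\bb\}\subseteq\calT$ is contained in some face of $\calS$. The goal is to conclude that $\calT$ itself lies in a single face $\calF$ of $\calS$, which immediately gives $z\in\simplex^\calF\subseteq|\calS|$.

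Promoting these pairwise conditions to joint containment is the main combinatorial obstacle, and is where the polyhedral structure of $\calS$ must enter. I would proceed by induction on $|\calT|$, with the cases $|\calT|\leq 2$ handled directly by hypothesis. For the inductive step, fix $\ba\in\calT$: the inductive hypothesis supplies a face $\calF_\ba$ of $\calS$ containing $\calT\setminus\{\ba\}$, and one then exploits the face-intersection property of polyhedral subdivisions recorded in Section~\ref{S:RegSubdv}---namely that $\calG\cap\calH$ is again a face of $\calS$ when $\calG,\calH$ are---together with faces witnessing the pairwise conditions $\{\ba,\bb\}\subseteq\calF_{\ba,\bb}$ for $\bb\in\calT\setminus\{\ba\}$, to assemble a common face containing all of $\calT$. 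This is a polyhedral analogue of the standard Stanley--Reisner argument of \cite[Chapter~1]{MSt}, to which the reader is referred for the more general simplicial case.
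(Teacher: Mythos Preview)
The paper does not give a proof of this proposition; it simply remarks that the statement is standard and refers the reader to \cite[Chapter~1]{MSt} for the general simplicial-complex version. So there is no argument in the paper against which to compare yours.

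Your forward inclusion $|\calS|\subseteq V$ is correct. The gap is in the inductive step for $V\subseteq|\calS|$: having a face $\calF_\ba\supseteq\calT\setminus\{\ba\}$ and faces $\calF_{\ba,\bb}\supseteq\{\ba,\bb\}$ gives you no mechanism for producing a face containing all of $\calT$. The face-intersection property runs the wrong direction---intersecting faces only shrinks them, and a polyhedral subdivision carries no ``join'' operation. Concretely, take $\calA=\{\ba_1,\ba_2,\ba_3,\bc\}\subset\R^2$ with $\ba_1,\ba_2,\ba_3$ the vertices of a triangle and $\bc$ their centroid, and let $\calS$ be the regular subdivision into the three triangles through $\bc$. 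Every pair $\{\ba_i,\ba_j\}$ is a face of $\calS$ (a boundary edge), yet $\calT=\{\ba_1,\ba_2,\ba_3\}$ lies in no face; the point $z=(\tfrac13,\tfrac13,\tfrac13,0)\in\simplex^\calA$ satisfies every listed monomial equation but does not belong to $|\calS|$. Thus your induction cannot be completed as sketched---and in fact this example shows that the proposition, read literally, already fails: the Stanley--Reisner description of $|\calS|$ requires the monomials for \emph{all} minimal non-faces, which for polyhedral subdivisions need not have cardinality at most two.
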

%%%%%%%%%%%%%%%%%%%%%%%%%%%%%%%%%%%%%%%%%%%%%%%%%%%%%%%%%%%%%%%%%%%%%%%%%%%%%%%%%

For a polyhedral subdivision $\calS$ of $\calA$, the corresponding union of toric varieties
 \begin{equation}\label{Eq:complex}
   \defcolor{X(\calS)}\ :=\ \bigcup_{\calF\mbox{\scriptsize\ a face of }\calS} X_\calF
 \end{equation}
is the \demph{complex of toric varieties corresponding to $\calS$}.
This is the union of toric varieties $X_\calF$ for $\calF$ a facet of $\calS$
glued together along toric subvarieties corresponding to common faces.
That is, if $\calG=\calF\cap\calF'$, then $\Delta_\calG=\Delta_\calF\cap\Delta_{\calF'}$ is
a common face and $X_\calG=X_\calF\cap X_{\calF'}$.

A point $w\in\R^\calA_>$ of the positive torus 
acts on the complex of toric varieties~\eqref{Eq:complex} by translation,
giving the translated complex,
 \begin{equation}\label{Eq:complex_translated}
   \defcolor{X(\calS,w)}\ :=\ w.X(\calS)\ =\ 
   \bigcup_{\calF\mbox{\scriptsize\ a face of }\calS} X_{\calF,w}\,.
 \end{equation}
A consequence of~\eqref{Eq:complex_translated}
and the decomposition~\eqref{Eq:disjont_deomposition} of $X_\calA$ into disjoint orbits is
the decomposition of $X(\calS,w)$ into disjoint orbits,
\[
   X(\calS,w)\ =\ \bigsqcup_{\calF\mbox{\scriptsize\ a face of }\calS} X^\circ_{\calF,w}\,.
\]
The union of $X^\circ_{\calF,w}$ where $\calF$ ranges over the facets of $\calS$ forms
a dense open subset of the complex  $X(\calS,w)$ of toric varieties.

%%%%%%%%%%%%%%%%%%%%%%%%%%%%%%%%%%%%%%%%%%%%%%%%%%%%%%%%%%%%%%%%%%%%%%%%%%%%%%%%%
\subsection{Toric degenerations}

An element $\lambda\in\R^\calA$ defines a one-parameter subgroup \defcolor{$\lambda(t)$}
in $\R^\calA$: for $t\in\R$ set $\lambda(t)_{\ba}:= \exp(t\lambda(\ba))$.
Given $w\in\R^{\calA}$, we have the coset
$\defcolor{w_\lambda(t)}:=\lambda(t)\cdot w$ and the corresponding family of translated
toric varieties $X_{\calA,w_\lambda(t)}= \lambda(t). X_{\calA,w}$.
We omit the proof of the following theorem, which is the same, {\it mutatis mutandis}, as that of the corresponding  
statement (Theorem~3) in~\cite{GSZ}.

%%%%%%%%%%%%%%%%%%%%%%%%%%%%%%%%%%%%%%%%%%%%%%%%%%%%%%%%%%%%%%%%%%%%%%%%%%%%%%%%%
\begin{theorem}\label{Th:toric_degenerations}
 Let $\lambda\in\R^\calA$.
 For any $w\in\R^\calA_>$, the family $\lambda(t).X_{\calA,w}$ of translated toric
 varieties has a limit as $t\to\infty$ in the Hausdorff topology on closed subsets of
 $\simplex^\calA$, and 
\[
   \lim_{t\to\infty} \lambda(t).X_{\calA,w}\ =\ X(\calS,w)\,,
\]
 where $\calS$ is the regular subdivision of $\calA$ induced by $\lambda$.
\end{theorem}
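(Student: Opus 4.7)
The plan is to establish the two inclusions of the Hausdorff limit separately: every subsequential accumulation point of $\{\lambda(t).X_{\calA,w}\}$ lies in $X(\calS,w)$, and every point of $X(\calS,w)$ is such a limit. The crucial preliminary reduction is the stabilizer identity of Lemma~\ref{L:stabilizer}: writing $\widetilde{\lambda}_\calG$ for the affine function on $\R^d$ that agrees with $\lambda$ on a facet $\calG$ of $\calS=\calS_\lambda$, the element $\exp(t\widetilde{\lambda}_\calG)$ lies in the stabilizer of $X_\calA$, so
\[
X_{\calA,w_\lambda(t)}\ =\ X_{\calA,\,w\cdot\exp(t(\lambda-\widetilde{\lambda}_\calG))}.
\]
The rewritten weights equal $w_\ba$ for $\ba\in\calG$ and, by the third inequality of~\eqref{Eq:liftIneq} in Lemma~\ref{L:convex_combinations}, decay exponentially to zero for $\ba\notin\calG$.

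For the first inclusion, suppose $z^{(n)}\in X_{\calA,w_\lambda(t_n)}$ with $t_n\to\infty$ and $z^{(n)}\to z^*$. I would feed the nonnegative relations furnished by Lemma~\ref{L:convex_combinations} into the equations of Proposition~\ref{P:equations}. If $\{\ba,\bb\}$ is contained in no face of $\calS$, the relation $\beta_\ba\ba+\beta_\bb\bb=\sum_{\bg\in\calG}\alpha_\bg\bg$ (with $p=\beta_\ba\ba+\beta_\bb\bb$) yields, after inserting the weights $w_\lambda(t_n)$ and cancelling the $w$-factors, an identity of the form
\[
\bigl(z^{(n)}_\ba\bigr)^{\beta_\ba}\bigl(z^{(n)}_\bb\bigr)^{\beta_\bb}\ =\ C\cdot\exp\bigl(-t_n\bigl(\widetilde{\lambda}(p)-\beta_\ba\lambda(\ba)-\beta_\bb\lambda(\bb)\bigr)\bigr)\cdot\prod_{\bg\in\calG}\bigl(z^{(n)}_\bg\bigr)^{\alpha_\bg},
\]
where $C>0$ depends only on $w$. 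The exponent is strictly negative by~\eqref{Eq:liftIneq}, while the $z^{(n)}_\bg$ are bounded by $1$, so the right side vanishes and $z^*_\ba z^*_\bb=0$. The analogous calculation using Lemma~\ref{L:convex_combinations}(ii) forces $z^*_\bc=0$ whenever $\bc\in\calA$ belongs to no face of $\calS$. Proposition~\ref{P:geometric_realization} then places $z^*$ in some simplex face $\simplex^\calF$ of $|\calS|$. To upgrade this to $z^*\in X_{\calF,w}$, I would use that $\lambda|_\calF$ is affine: for every affine relation among points of $\calF$, the corresponding exponential factor in Proposition~\ref{P:equations} is $1$, so the equation itself passes directly to the limit and is satisfied by $z^*$.

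For the second inclusion, fix a facet $\calG$ of $\calS$ and $y\in\R^d_>$. By the stabilizer identity, the point
\[
z_t\ :=\ [\,w_\ba\,\exp\!\bigl(t(\lambda(\ba)-\widetilde{\lambda}_\calG(\ba))\bigr)\,y^\ba\ \mid\ \ba\in\calA\,]
\]
lies in $X_{\calA,w_\lambda(t)}$ for all $t>0$, and as $t\to\infty$ its coordinates for $\ba\notin\calG$ vanish while those for $\ba\in\calG$ remain $w_\ba y^\ba$, so $z_t\to w.\varphi_\calG(y)\in X^\circ_{\calG,w}$. The accumulation set of the family therefore contains $X^\circ_{\calG,w}$, and since it is closed it contains all of $X_{\calG,w}$. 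Taking the union over facets gives $X(\calS,w)$, because every face $\calF$ of $\calS$ is contained in a facet and $X_{\calF,w}\subset X_{\calG,w}$ whenever $\calF\subset\calG$.

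The main obstacle is the refinement step inside the first inclusion, where one must strengthen ``$z^*\in|\calS|$'' to ``$z^*\in X_{\calF,w}$''. This is precisely the step for which the affinity of $\lambda$ along each face of $\calS$ is indispensable, since that is what kills the exponential factor in the relevant equation and permits passage to the limit. Everything else is bookkeeping that combines the stabilizer identification with the combinatorial inequalities of Lemma~\ref{L:convex_combinations}.
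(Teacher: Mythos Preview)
The paper does not actually prove Theorem~\ref{Th:toric_degenerations}; it only says that the proof is the same, \emph{mutatis mutandis}, as that of Theorem~3 in~\cite{GSZ}, and moves on. Your proposal is a correct such proof. It is organized exactly like the paper's later proof of the harder sequence version, Theorem~\ref{Th:creatingLimits} (Lemmas~\ref{L:first_third}--\ref{L:last}): feed the nonnegative relations of Lemma~\ref{L:convex_combinations} into the equations of Proposition~\ref{P:equations} to force accumulation points into $|\calS|$; use that $\lambda|_\calF$ is affine to kill the exponential factor and land in $X_{\calF,w}$; and then produce explicit preimages $z_t$ converging to a dense subset of $X(\calS,w)$ via the stabilizer identity. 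The one-parameter situation is simpler than Section~\ref{algorithm} precisely because no cone-selection or minimum-face-of-boundedness argument is needed---the inequalities~\eqref{Eq:liftIneq} apply directly to $\lambda$.

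One small point of phrasing: in your second inclusion you say the accumulation set ``is closed'' and therefore contains $X_{\calG,w}$. That is true, but what you really need for the Hausdorff limit (as recalled in \S\ref{S:HD}) is that every point of $X(\calS,w)$ is a \emph{limit} point, not merely an accumulation point. This follows anyway: you have shown that the dense open set $\bigcup_{\calG}X^\circ_{\calG,w}$ consists of limit points, and by compactness of $X(\calS,w)$ a finite $\epsilon$-net of such limit points suffices to bring every point of $X(\calS,w)$ within $2\epsilon$ of $X_{\calA,w_\lambda(t)}$ for all large $t$.
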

%%%%%%%%%%%%%%%%%%%%%%%%%%%%%%%%%%%%%%%%%%%%%%%%%%%%%%%%%%%%%%%%%%%%%%%%%%%%%%%%%

This theorem shows that when $\calS$ is the regular subdivision induced by the function
$\lambda$, the complex of toric varieties $X(\calS,w)$ is the limit of a sequence 
of translates of the real irrational toric variety $X_{\calA,w}$ by elements of the
one-parameter subgroup $\lambda(t)$.

In \cite[Th.~5.2]{GSZ} a weak converse of Theorem~\ref{Th:toric_degenerations}
was proved when $\calA\subset\Z^d$:
if a sequence of translates of $X_\calA$  has a limit in
the Hausdorff topology, then this limit is the complex of toric varieties
$X(\calS,w)$ for some regular subdivision $\mathcal{S}$ of $\calA$ and $w\in\R^\calA_>$. 

We prove a stronger result, that {\it every} sequence of
translates has a subsequence that converges in the Hausdorff topology
to some complex $X(\calS,w)$ of toric varieties.
This implies that the set of translates of $X_\calA$ is compactified by the
set of toric degenerations of $X_\calA$.

%%%%%%%%%%%%%%%%%%%%%%%%%%%%%%%%%%%%%%%%%%%%%%%%%%%%%%%%%%%%%%%%%%%%%%%%%%%%%%%%%
\begin{theorem}\label{Th:creatingLimits}
 For every finite set $\calA\subset\R^d$ and every sequence $\{w_i\mid i\in\N\}$ in the
 positive torus $\R^\calA_>$, there is a subsequence $\{u_j\mid j\in\N\}\subset\{w_i\mid
 i\in\N\}$, a regular subdivision $\calS$ of $\calA$,  and an element $w\in\R^\calA_>$
 such that
\[
    \lim_{j\to\infty} u_j.X_\calA\ =\ X(\calS,w)
\]
 in the Hausdorff topology on subsets of the simplex $\simplex^\calA$.
\end{theorem}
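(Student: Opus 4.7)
\textit{Plan.} I would pass to $\Log$-coordinates: setting $\lambda_i := \Log(w_i)$, Lemma~\ref{L:stabilizer} lets me work modulo $\Aff(\calA)$ inside the ambient space of the secondary fan $\Sigma_\calA$. Since $\Sigma_\calA$ has finitely many maximal cones, a first subsequence places every $\lambda_i$ in the closure of a single maximal cone $\tau$. By Lemma~\ref{L:finalSubdivision}, a further subsequence $\{\lambda_j\}$ admits a face $\sigma$ of $\tau$ as its (and every sub-subsequence's) minimum face of boundedness; set $\calS := \calS_\sigma$. $\sigma$-boundedness means the image of $\{\lambda_j\}$ in $\R^\calA/\langle\sigma\rangle$ is bounded, so a final subsequence extraction yields a decomposition $\lambda_j = s_j + r_j$ with $s_j\in\langle\sigma\rangle$ and $r_j\to r$ in a fixed complement of $\langle\sigma\rangle$. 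Setting $u_j := \exp(\lambda_j)$ and $w := \exp(r)$, I claim that $\lim_{j\to\infty} u_j.X_\calA = X(\calS, w)$ in the Hausdorff topology.

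For the inclusion $\lim u_j.X_\calA \subseteq X(\calS, w)$, take an accumulation point $z$ and use Propositions~\ref{P:geometric_realization} and~\ref{P:equations}. For each pair $\{\ba,\bb\}$ (or singleton $\bc$) not contained in any face of $\calS$, the same holds in the refinement $\calS_\tau$, so Lemma~\ref{L:convex_combinations} applied to $\calS_\tau$ furnishes an affine relation whose image under Proposition~\ref{P:equations} reads
\[
   z_\ba^{\beta_\ba}\,z_\bb^{\beta_\bb}\;=\;e^{\psi(\lambda_j)}\prod_{\bg\in\calG}z_\bg^{\alpha_\bg}
\]
on $u_j.X_\calA$ (with an analogous equation $z_\bc = e^{\psi(\lambda_j)}\prod z_\bg^{\alpha_\bg}$ in the singleton case), where the linear form $\psi$ annihilates $\Aff(\calA)$, is non-positive on $\tau$, and is strictly negative on the relative interiors of both $\tau$ and $\sigma$. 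The key fact $\psi(\lambda_j)\to-\infty$ then drives the right-hand side to zero, forcing $z_\ba z_\bb = 0$ or $z_\bc = 0$ and placing $z\in|\calS|$; applying Proposition~\ref{P:equations} within the face of $\calS$ supporting $z$ likewise pins $z$ inside $X(\calS,w)$.

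For the reverse inclusion I approximate each $y\in X^\circ_{\calF,w}$, $\calF$ a face of $\calS$, by points $y_j\in u_j.X_\calA$. The crucial structural fact is that $\langle\sigma\rangle = \sigma - \sigma$, and each element of $\sigma$ restricts to an affine function on $\calF$ (since $\calF$ is a face of $\calS_\sigma$); hence $s_j|_\calF(\ba) = a_j + b_j\cdot\ba$ is affine, with affine extension $\tilde s_j$ to $\R^d$. I choose $x_j\in\R^d_>$ so that the parametrization $\varphi_{\calA,u_j}(x_j)$ matches $y$ on the $\calF$-coordinates (which reduces to solving a linear system determined by $b_j$ and the parameter of $y$); the $\ba\in\calF$ coordinates then converge to those of $y$ because $r_j\to r$, while the coordinates indexed by $\ba\notin\calF$ are forced to zero because $s_{j,\ba} - \tilde s_j(\ba)$ tends to $-\infty$---again a consequence of the minimality of $\sigma$ via the same functional-vanishing mechanism used for $\psi$.

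The main technical obstacle, common to both inclusions, is the assertion $\psi(\lambda_j)\to-\infty$. Arguing by contradiction: if some subsequence of $\psi(\lambda_j)$ were bounded, then since $\psi\le 0$ on $\tau$ and $\tau$ is full-dimensional in $\R^\calA/\Aff(\calA)$, this subsequence would be $(\psi^\perp\cap\tau)$-bounded in the sense of Section~\ref{S:SeqsInCones}; the minimality of $\sigma$ then forces $\sigma\subseteq\psi^\perp\cap\tau$, contradicting the strict negativity of $\psi$ on the relative interior of $\sigma$. It is precisely the combinatorial control of the secondary fan packaged in Lemmas~\ref{L:finalSubdivision} and~\ref{L:cone_bounded} that makes this contradiction available and simultaneously controls the excess $s_{j,\ba} - \tilde s_j(\ba)$ needed for the reverse inclusion.
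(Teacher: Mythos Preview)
Your plan is essentially the paper's: pass to logarithms, land in one cone $\tau$ of the secondary fan, apply Lemma~\ref{L:finalSubdivision} to isolate the minimum face of boundedness $\sigma$, set $\calS=\calS_\sigma$, decompose and extract a limit weight $w$, then prove both Hausdorff inclusions using the linear functionals furnished by Lemma~\ref{L:convex_combinations} together with Propositions~\ref{P:geometric_realization} and~\ref{P:equations}. The one step that differs in detail is your derivation of $\psi(\lambda_j)\to-\infty$: your assertion that a subsequence on which $\psi$ is bounded is automatically $(\psi^\perp\cap\tau)$-bounded is correct (writing elements of the polyhedral cone $\tau$ in terms of generators, those generators on which $\psi<0$ must carry bounded coefficients, and the remaining generators span $\psi^\perp\cap\tau$), but this fact is not actually recorded in Section~\ref{S:SeqsInCones}---the paper instead reaches the contradiction through Lemma~\ref{L:cone_bounded}, picking $v\in\sigma$ with $\varphi(v)$ equal to the putative lower bound so that infinitely many $v_i-v$ fall outside $\tau$.
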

%%%%%%%%%%%%%%%%%%%%%%%%%%%%%%%%%%%%%%%%%%%%%%%%%%%%%%%%%%%%%%%%%%%%%%%%%%%%%%%

%%%%%%%%%%%%%%%%%%%%%%%%%%%%%%%%%%%%%%%%%%%%%%%%%%%%%%%%%%%%%%%%%%%%%%%%%%%%%%%%%
The subsequence $\{u_j\mid j\in\N\}$ and regular subdivision $\calS$ of Theorem~\ref{Th:creatingLimits} are
obtained as follows.
The sequence of logarithms  $\{\Log(u_j)\mid j\in\N\}$ is a subsequence of $\{\Log(w_i)\mid i\in\N\}$
that lies in a cone $\tau$ of the secondary fan of $\calA$ having a face $\sigma$ that is the minumum face of
boundedness of any subsequence of   $\{\Log(u_j)\mid j\in\N\}$, as in Lemma~\ref{L:finalSubdivision}.
Then the subdivision $\calS$ is $\calS_\sigma$, the regular subdivision of $\calA$ 
corresponding to the cone $\sigma$.
More details are given in Subsection~\ref{S:limit_set}, where we also define the weight $w$.

The proof of Theorem~\ref{Th:creatingLimits} occupies Section~\ref{algorithm}.

%%%%%%%%%%%%%%%%%%%%%%%%%%%%%%%%%%%%%%%%%%%%%%%%%%%%%%%%%%%%%%%%%%%%%%%%%%%%%%%%%
%
\section{Hausdorff limits of torus translates}\label{algorithm}

Let $\{w_i\mid i\in\N\}\subset\R^\calA_>$ be a sequence of elements of the positive torus.
Consider the corresponding sequence of logarithms,
$\defcolor{v_i}:=\Log(w_i)$ for $i\in\N$.
We show the existence of a subsequence of $\{w_i\}$ (equivalently of 
$\{v_i\}$) so that the corresponding sequence of torus translates $X_{\calA,w_i}$ of
$X_{\calA}$ converges in the Hausdorff topology to a complex of toric varieties
$X(\calS,w)$  for some regular subdivision $\calS$ (which we construct) of $\calA$ and a weight
$w\in\R^\calA_>$ (which we also identify). 
For this, we will freely replace the sequence $\{v_i\}$ by subsequences
throughout.
Let us begin with an example.

%%%%%%%%%%%%%%%%%%%%%%%%%%%%%%%%%%%%%%%%%%%%%%%%%%%%%%%%%%%%%%%%%%%%%%%%%%%%%%%%%
\begin{example}\label{Ex:convergence-trivial}
 Suppose that the sequence $\{v_i\mid i\in\N\}$ has an accumulation point modulo
 $\Aff(\calA)$.
 Replacing $\{v_i\}$ by a subsequence, we may assume that it is convergent
 modulo $\Aff(\calA)$.
 Convergent sequences are bounded so there is a bounded set $B\subset\R^\calA$
 with $\{v_i\mid i\in\N\}\subset \Aff(\calA)+B$ and therefore a sequence
 $\{u_i\mid i\in\N\}\subset\Aff(\calA)$ and a convergent sequence
 $\{\overline{v}_i\mid i\in\N\}\subset B$ such
 that $v_i=u_i+\overline{v}_i$, for each $i\in\N$.
 Let \defcolor{$v$} be the limit of the sequence $\{\overline{v}_i\}$.

 Set $\defcolor{\overline{w}_i}:=\Exp(\overline{v}_i)$ and 
 $\defcolor{w}:=\Exp(\overline{v})$.
 As $v_i-\overline{v}_i\in\Aff(\calA)$, we have 
 $X_{\calA,w_i}=X_{\calA,\overline{w}_i}$.
 Then, as  $\lim_{i\to\infty}\overline{w}_i=w$, the equations of Proposition~\ref{P:equations}
 for $X_{\calA,w_i}$ show that  
\[
   \lim_{i\to \infty}  w_i.X_{\calA}\ =\ 
   \lim_{i\to \infty}  \overline{w}_i.X_{\calA}\ =\ w.X_{\calA}\,,
\]
 In this case, the limit of torus translate is just another torus translate. 
\end{example}
%%%%%%%%%%%%%%%%%%%%%%%%%%%%%%%%%%%%%%%%%%%%%%%%%%%%%%%%%%%%%%%%%%%%%%%%%%%%%%%%%

%%%%%%%%%%%%%%%%%%%%%%%%%%%%%%%%%%%%%%%%%%%%%%%%%%%%%%%%%%%%%%%%%%%%%%%%%%%%%%%%%
%
\subsection{The limiting set $X(\calS,w)$}
\label{S:limit_set}
We replace $\{v_i\mid i\in\N\}$ by a subsequence from which we determine a regular
subdivision $\calS$ of $\calA$ and a weight $w\in\R^\calA_>$.
These define a complex $X(\calS,w)$  of toric varieties which we will show is the limit of the
sequence of torus translates $X_{\calA,w_i}$ corresponding to that subsequence.

The secondary fan $\Sigma_\calA$ consists of finitely many cones $\sigma$.
Let $\tau\in\Sigma_\calA$ be a cone which is minimal under inclusion such that
$\tau\cap\{v_i\mid i\in\N\}$ is infinite.
Replacing $\{v_i\}$ by $\tau\cap\{v_i\}$, we have that
$\{v_i\}\subset\tau$ and if $\sigma\subsetneq\tau$, then 
$\{v_i\}\cap\sigma$ is finite.

By Lemma~\ref{L:finalSubdivision}, replacing $\{v_i\}$ by a
subsequence if necessary, there is a face $\sigma$ of $\tau$ that is the minimum face of boundedness of any
subsequence of $\{v_i\}$. 
Let $\calS:=\calS_\sigma$ be the regular subdivision of $\calA$ corresponding to $\sigma$.

As $\{v_i\}$ is $\sigma$-bounded, its image in
$\R^{\calA}/\langle\sigma\rangle$ is bounded, and so there exists a closed bounded
set $B\subset\tau$ with $\{v_i\}\subset\sigma+ B$.
Thus there are sequences $\{u_i\}\subset\sigma$ and 
$\{\overline{v}_i\}\subset B$ with 
\[
    v_i\ =\ u_i+\overline{v}_i
   \qquad\mbox{for}\quad i\in\N\,.
\]
The sequence $\{\overline{v}_i\}$ has an accumulation
point $v\in B$.
Replacing $\{\overline{v}_i\}$ by a subsequence we have that
\[
   \lim_{i\to\infty} \overline{v}_i\ =\ v\,,
\]
and replace $\{u_i\}$ and $\{v_i\}$ by the corresponding subsequences. 
We define the vector \demph{$w$} by $w_\ba:=\exp(v_\ba)$ and write 
$w=\defcolor{\Exp}(v)$.

We show that $X(\calS,w)$ is the limit of the sequence of translations of the complex 
$X(\calS)$ of toric varieties by the sequence 
$\{w_i\mid i\in\N\}=\{\Exp(v_i)\mid i\in\N\}$.

%%%%%%%%%%%%%%%%%%%%%%%%%%%%%%%%%%%%%%%%%%%%%%%%%%%%%%%%%%%%%%%%%%%%%%%%%%%%%%%%%
\begin{lemma}\label{L:complex_limit}
  In the Hausdorff topology we have
\[
    \lim_{i\to\infty}  X(\calS,w_i)\ =\ X(\calS,w)\,.
\]
\end{lemma}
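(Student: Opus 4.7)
The approach is to exploit the stabilizer of a translated toric variety (Lemma~\ref{L:stabilizer}) to absorb the ``unbounded part'' $u_i$ of $v_i = u_i + \overline{v}_i$, reducing the problem to Hausdorff continuity of the torus action along the convergent sequence $\overline{v}_i \to v$. The key combinatorial input is that each $u_i$, as an element of $\sigma$, restricts to an affine function on every face of $\calS = \calS_\sigma$.

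First I would verify this combinatorial input: for any face $\calF$ of $\calS$ and any $\lambda \in \sigma$, the restriction $\lambda|_\calF$ is affine. Indeed, $\lambda$ lies in the relative interior of some face $\sigma'$ of $\sigma$, and by the correspondence between cones of the secondary fan and regular subdivisions recalled in Section~1, the fact that $\sigma'$ is a face of $\sigma$ means $\calS_{\sigma'} \prec \calS = \calS_\sigma$. Thus $\calF$ is contained in some face $\calF'$ of $\calS_\lambda = \calS_{\sigma'}$, on which $\lambda$ is affine because it induces $\calS_\lambda$; hence $\lambda|_\calF$ is also affine.

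Applied to each $u_i \in \sigma$, this yields $u_i|_\calF \in \Aff(\calF)$, so $v_i|_\calF - \overline{v}_i|_\calF \in \Aff(\calF)$ for every face $\calF$ of $\calS$. Lemma~\ref{L:stabilizer} then gives $X_{\calF, w_i} = X_{\calF, \overline{w}_i}$, where $\overline{w}_i := \Exp(\overline{v}_i)$. Since $\overline{v}_i \to v$ implies $\overline{w}_i \to w = \Exp(v)$ coordinatewise, and since the torus action on the compact set $X_\calF \subset \simplex^\calA$ is continuous, the map $u \mapsto u.X_\calF$ is continuous in the Hausdorff metric; therefore $X_{\calF, w_i} = X_{\calF, \overline{w}_i} \to X_{\calF, w}$ for each face $\calF$ of $\calS$.

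Finally, $X(\calS, w_i) = \bigcup_\calF X_{\calF, w_i}$ is a finite union over the faces of $\calS$, and Hausdorff convergence is preserved by finite unions (an elementary check from the definition of Hausdorff distance), so $\lim_{i \to \infty} X(\calS, w_i) = \bigcup_\calF X_{\calF, w} = X(\calS, w)$. The only nontrivial step is the combinatorial observation in the second paragraph; everything else is formal and follows from the continuity of the torus action and standard properties of the Hausdorff topology.
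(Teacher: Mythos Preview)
Your proposal is correct and follows essentially the same approach as the paper: reduce to each face $\calF$ of $\calS$, use that $u_i\in\sigma$ restricts to an affine function on $\calF$ so that $X_{\calF,w_i}=X_{\calF,\overline w_i}$ via Lemma~\ref{L:stabilizer}, then pass to the limit using $\overline w_i\to w$ and take the finite union. Your second paragraph supplies a careful justification (via the refinement $\calS_{\sigma'}\prec\calS_\sigma$ for faces $\sigma'\subset\sigma$) of the affineness claim that the paper simply asserts, and you make explicit the continuity and finite-union steps that the paper leaves implicit.
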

%%%%%%%%%%%%%%%%%%%%%%%%%%%%%%%%%%%%%%%%%%%%%%%%%%%%%%%%%%%%%%%%%%%%%%%%%%%%%%%%%

%%%%%%%%%%%%%%%%%%%%%%%%%%%%%%%%%%%%%%%%%%%%%%%%%%%%%%%%%%%%%%%%%%%%%%%%%%%%%%%%%
\begin{proof}
 Set $\overline{w}_i:=\Exp(\overline{v}_i)$.
 Let $\calF$ be a face of $\calS=\calS_\sigma$.
 Then $\overline{v}_i-v_i=u_i\in\sigma$, and so the restriction of each $u_i$ to $\calF$ is an
 affine function.
 Thus $X_{\calF,w_i}=X_{\calF,\overline{w}_i}$, by Lemma~\ref{L:stabilizer}.
 Since $\overline{v}_i\to v$ as $i\to\infty$, we have 
 $\overline{w}_i\to w$ as $i\to\infty$, and thus
 \begin{equation}\label{Eq:limit_F}
   \lim_{i\to\infty} X_{\calF,w_i}\ =\ X_{\calF,w}\,.
 \end{equation}
 This proves the lemma, by the definition~\eqref{Eq:complex_translated}, 
 $X(\calS,w_i)$ and $X(\calS,w)$ are the union of 
 $X_{\calF,w_i}$ and $X_{\calF,w}$ for $\calF$ a face of $\calS$, respectively.
\end{proof}
%%%%%%%%%%%%%%%%%%%%%%%%%%%%%%%%%%%%%%%%%%%%%%%%%%%%%%%%%%%%%%%%%%%%%%%%%%%%%%%%%

%%%%%%%%%%%%%%%%%%%%%%%%%%%%%%%%%%%%%%%%%%%%%%%%%%%%%%%%%%%%%%%%%%%%%%%%%%%%%%%%%
\begin{example}\label{Ex:final_weights}
For the point configuration 
$\calA:=\{(0,0),(1,0),(1,1),(\frac{1}{2},\frac{3}{2}),(0,1)\}$ in $\R^2$ of
Example~\ref{Ex:pentagon}, consider the sequence 
$\{w_i \mid i\in\N\}\subset \R^{\calA}_>$ where $w_i:=\Exp(v_i)$ and 
\[
   \defcolor{v_i}\ :=\ 
   \left(-i{-}\tfrac{1}{i}\,,\,i{-}1 \,,\,i \,,\,-\tfrac{i}{2}\,,\,-i \right)\,.
\]
An affine function on $\R^2$ is given by $(x,y)\mapsto a+bx+cy$, and so this sequence is
equivalent modulo $\Aff(\calA)$ to any sequence of the form
\[
  \left( a_i{-}i{-}\tfrac{1}{i}\,,\, a_i{+}b_i{+}i{-}1 \,,\, a_i{+}b_i{+}c_i{+}i \,,\,
         a_i{+}\tfrac{b_i}{2}{+}\tfrac{3c_i}{2}{-}\tfrac{i}{2} \,,\,a_i{+}c_i{-}i\right)\,.
\]
Setting $a_i=0$, $b_i=-2i$, and $c_i=i$, we obtain the equivalent sequence
\[
   \widetilde{v}_i\ :=\ 
    \left(  -i{-}\tfrac{1}{i} \,,\, -i{-}1 \,,\,0 \,,\, 0 \,,\, 0 \right)\,,
\]
which lies in the plane used in Example~\ref{Ex:pentagon} for representatives of
$\R^\calA$ modulo $\Aff(\calA)$.

In Figure~\ref{F:valuesOfSeq}, we show the coordinates of $v_i$ and 
$\widetilde{v}_i$, together with the induced triangulation, which is the same for all
$i>1$. 
%%%%%%%%%%%%%%%%%%%%%%%%%%%%%%%%%%%%%%%%%%%%%%%%%%%%%%%%%%%%%%%%%%%%%%%%%%%%%%%%%
\begin{figure}[htb]
  \begin{picture}(84,60)(-19,-5)
   \put(7.5,-2){\includegraphics{figures/pentagonBig.eps}}
     \put(-10,45){\footnotesize$v_i$}
      \put(15,52){\footnotesize$-\frac{i}{2}$}
   \put( -6,26){\footnotesize$-i$}   \put(42,26){\footnotesize$i$}
   \put(-19,-4){\footnotesize$-i{-}\frac{1}{i}$}  
         \put(42,-4){\footnotesize$i{-}1$}
  \end{picture}
    \qquad
  \begin{picture}(84,60)(-19,-5)
   \put(7.5,-2){\includegraphics{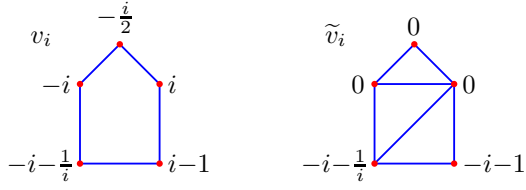}}
     \put(-10,45){\footnotesize$\widetilde{v}_i$}
      \put(21,48){\footnotesize$0$}
   \put(  0,26){\footnotesize$0$}   \put(42,26){\footnotesize$0$}
   \put(-19,-4){\footnotesize$-i{-}\frac{1}{i}$}  
         \put(42,-4){\footnotesize$-i{-}1$}
  \end{picture}
 \caption{Triangulation induced by $\{v_i\mid i\in\N\}$.}
 \label{F:valuesOfSeq}
\end{figure}
%%%%%%%%%%%%%%%%%%%%%%%%%%%%%%%%%%%%%%%%%%%%%%%%%%%%%%%%%%%%%%%%%%%%%%%%%%%%%%%%%
Thus we see that each $\widetilde{v}_i$ and also $v_i$ lies in the full-dimensional cone
$\tau_3$ of the secondary fan $\Sigma_\calA$ (see Figures~\ref{F:allRegular}
and~\ref{F:SecFan}).
In the coordinates $\R^2$ for $\R^\calA/\Aff(\calA)$, we have 
$\widetilde{v}_i=(-i-\tfrac{1}{i},-i{-}1)$  and the rays $\rho_3$ and $\rho_4$ of
$\tau_3$ are generated by $e_3:=(0,-1)$ and $e_4:=(-1,-1)$,
respectively.
Writing $\widetilde{v}_i$ in this basis for $\R^2$ gives
\[
   \widetilde{v}_i\ :=\ (1{-}\tfrac{1}{i})e_3 \ +\ (i{+}\tfrac{1}{i}) e_4\,.
\] 
Thus the images of $\{v_i\}$ in the quotients 
$\R^\calA/\langle\rho_3\rangle=\R^2/\R e_3\simeq \R e_4$ 
and $\R^\calA/\langle\rho_4\rangle= \R^2/\R e_4\simeq \R e_3$ are, respectively,
\[
   \{(i{+}\tfrac{1}{i})e_4\}
  \qquad\mbox{ and }\qquad
   \{(1{-}\tfrac{1}{i})e_3\}\,.
\]
The first is divergent while the second is bounded.

The minimum face of boundedness of any subsequence of $\{v_i\}$ is $\rho_4$.
If we set
\[
  u_i\ :=\ (-i-\frac{1}{i}\,,\,-i-\frac{1}{i}\,,\,0\,,\,0\,,\,0)
   \qquad\mbox{and}\qquad
  \overline{v}_i\ :=\ (0\,,\,-1+\frac{1}{i}\,,\,0\,,\,0\,,\,0)\,,
\]
then $ \widetilde{v}_{i}=u_i+\overline{v}_i$, where $u_i\in\rho_4$ and 
$\{\overline{v}_i\}$ is bounded in $\tau_3$.
Then $v=(0,-1,0,0,0)$ and thus 
$w:=(1,\frac{1}{e},1,1,1)$.
We display $\widetilde{v}_i$ for $i=1,\dotsc,6$, 
$v+\langle\rho_4\rangle$, and $v$ below.
\[
  \begin{picture}(140,81)(-1,0)
   \put(0,0){\includegraphics{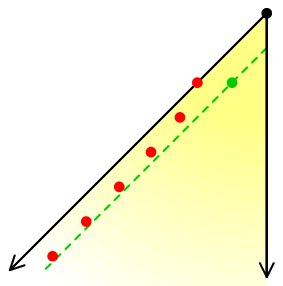}}
   \put(-1,15){$\rho_4$}  \put(48,13){$\tau_3$}  \put(83,14){$\rho_3$}
   \put(68,51){$v$}
   \put(98,60.5){$v+\langle\rho_4\rangle$}
   \put(95,63.3){\vector(-1,0){20}}

   \put(12,56){$\widetilde{v}_i$}
   \put(23,60){\vector(1,0){34}}
   \put(23,58){\vector(4,-1){27}}
   \put(23,55.8){\vector(3,-2){19}}
   \put(22,53.5){\vector(2,-3){12}}
   \put(20,52){\vector(1,-4){6}}
   \put(18.5,52){\vector(0,-1){40}}
  \end{picture}
\]

We determine the limit of the sequence 
$\{X_{\calA,w_i}\mid i\in\N\}$.
By Proposition~\ref{P:equations}, for $w\in\R^\calA$, $X_{\calA,w}$  
is defined by the vanishing of the five homogenous binomials,
\begin{eqnarray}\nonumber
   &w_{10}w_{01}z_{00}z_{11}-w_{00}w_{11}z_{10}z_{01}\,,&\\ \label{Eq:HB}
  &w_{00}w_{11}^3z_{10}^2z_{\frac{1}{2}\frac{3}{2}}^2
          -w_{10}^2w_{\frac{1}{2}\frac{3}{2}}^2z_{00}z_{11}^3\,,\ 
   w_{10}w_{01}^3z_{00}^2z_{\frac{1}{2}\frac{3}{2}}^2
          -w_{00}^2w_{\frac{1}{2}\frac{3}{2}}^2z_{10}z_{01}^3\,,&\\
  &w_{01}^2w_{11}z_{00}z_{\frac{1}{2}\frac{3}{2}}^2
          -w_{00}w_{\frac{1}{2}\frac{3}{2}}^2z_{01}^2z_{11}\,,\ 
   w_{01}w_{11}^2z_{10}z_{\frac{1}{2}\frac{3}{2}}^2
          -w_{10}w_{\frac{1}{2}\frac{3}{2}}^2z_{01}z_{11}^2&\nonumber
\end{eqnarray}
Set $w_i=\Exp(v_i)=(e^{-i-\frac{1}{i}},e^{i-1},e^i,e^{-\frac{1}{2}},e^{-i})$ and 
consider the sequence $\{X_{\calA,w_i}\}$.
We invite the reader to check that if $w\in\Exp(\Aff(\calA))$, then the coefficients of the 
monomial terms in each binomial of~\eqref{Eq:HB} are equal and therefore 
$X_{\calA}=X_{\calA,w}$.
It follows that for each $i$, $X_{\calA,w_i}\simeq X_{\calA,\widetilde{w}_i}$, where
\[
  \widetilde{w}_i\ :=\ \Exp(\widetilde{v}_i)\ =\ 
  (e^{-i-\frac{1}{i}}, e^{-i-1},  1, 1,  1)\,.
%  00                   10     11 1/2 01
\]
 Then $X_{\calA,\widetilde{w}_i}$ is defined by the binomials
\begin{eqnarray*}\nonumber
   &e^{-i-1}z_{00}z_{11}-e^{-i-\frac{1}{i}}z_{10}z_{01}\,,\ 
  e^{-i-\frac{1}{i}}z_{10}^2z_{\frac{1}{2}\frac{3}{2}}^2
          -e^{-2i-2}z_{00}z_{11}^3\,,\ 
   e^{-i-1}z_{00}^2z_{\frac{1}{2}\frac{3}{2}}^2
          -e^{-2i-\frac{2}{i}}z_{10}z_{01}^3\,,&\\
  &z_{00}z_{\frac{1}{2}\frac{3}{2}}^2
          -e^{-i-\frac{1}{i}}z_{01}^2z_{11}\,,\ 
   z_{10}z_{\frac{1}{2}\frac{3}{2}}^2
          -e^{-i-1}z_{01}z_{11}^2&\nonumber
\end{eqnarray*}
We may rewrite the first three as 
\[
    e^{-1+\frac{1}{i}}z_{00}z_{11}-z_{10}z_{01}\,,\ 
    z_{10}^2z_{\frac{1}{2}\frac{3}{2}}^2
          -e^{-i-2+\frac{1}{i}}z_{00}z_{11}^3\,,\ 
   z_{00}^2z_{\frac{1}{2}\frac{3}{2}}^2
          -e^{-i+1-\frac{2}{i}}z_{10}z_{01}^3\,.
\]
Then, if we let $i\to\infty$, these five binomials become one binomial and
 two monomials,
\[
   e^{-1}z_{00}z_{11}-z_{10}z_{01}\,,\ 
   z_{10}^2z_{\frac{1}{2}\frac{3}{2}}^2\,,\ 
   z_{00}^2z_{\frac{1}{2}\frac{3}{2}}^2\,.
\]
The monomials define the subdivision $\calS$ of $\calA$ corresponding to the ray $\rho_4$, and
the binomial defines the toric variety $X_{\calF,w}$, where $\calF$ is the facet of the
subdivision consisting of the points $\calA\smallsetminus\{(\frac{1}{2},\frac{3}{2})\}$.
In particular, this computation implies that 
\[
   \lim_{i\to\infty} X_{\calA,w_i}\ =\ X(\calS,w)\,,
\]
which shows the conclusion of Theorem~\ref{Th:creatingLimits} for this example.
\end{example}
%%%%%%%%%%%%%%%%%%%%%%%%%%%%%%%%%%%%%%%%%%%%%%%%%%%%%%%%%%%%%%%%%%%%%%%%%%%%%%%%%

%%%%%%%%%%%%%%%%%%%%%%%%%%%%%%%%%%%%%%%%%%%%%%%%%%%%%%%%%%%%%%%%%%%%%%%%%%%%%%%%%
\begin{example}
 Example~\ref{Ex:final_weights} involved torus translations corresponding to the sequence $\{v_i\}$ of
 Example~\ref{Ex:mmfb}.
 We now consider the limit of torus translations corresponding to the sequence $\{u_i\}$  of
 Example~\ref{Ex:mmfb}.
 For $i\in \N$, set
\[
   \defcolor{w_i}\ :=\ ( e^{\sqrt{i}-i}, e^{-i}, 1,1,1)\,,
\]
 and $u_i:=\Log(w_i)=(\sqrt{i}-i,-i,0,0,0)$, which is essentially the sequence of $\{v_i\}$ of
 Example~\ref{Ex:mmfb}.
 Then $X_{\calA,w_i}$ is defined by the binomials
\begin{eqnarray*}\nonumber
   &e^{-i}z_{00}z_{11}-e^{\sqrt{i}-i}z_{10}z_{01}\,,\ 
  e^{\sqrt{i}-i}z_{10}^2z_{\frac{1}{2}\frac{3}{2}}^2
          -e^{-2i}z_{00}z_{11}^3\,,\ 
   e^{-i}z_{00}^2z_{\frac{1}{2}\frac{3}{2}}^2
          -e^{2\sqrt{i}-2i}z_{10}z_{01}^3\,,&\\
  &z_{00}z_{\frac{1}{2}\frac{3}{2}}^2
          -e^{\sqrt{i}-i}z_{01}^2z_{11}\,,\ 
   z_{10}z_{\frac{1}{2}\frac{3}{2}}^2
          -e^{-i}z_{01}z_{11}^2&\nonumber
\end{eqnarray*}
Rewriting the first three gives 
\[
    e^{-\sqrt{i}}z_{00}z_{11}-z_{10}z_{01}\,,\ 
    z_{10}^2z_{\frac{1}{2}\frac{3}{2}}^2
          -e^{-i-\sqrt{i}}z_{00}z_{11}^3\,,\ 
   z_{00}^2z_{\frac{1}{2}\frac{3}{2}}^2
          -e^{2\sqrt{i}-i}z_{10}z_{01}^3\,.
\]
 Then, as $i\to\infty$, these five binomials become the three monomials, 
\[
   z_{10}z_{01}\,,\ 
   z_{10}z_{\frac{1}{2}\frac{3}{2}}^2\,,\ 
   z_{00}z_{\frac{1}{2}\frac{3}{2}}^2\,.
\]
These monomials define the 
geometric realization $|S|$ of
the triangulation $\calS$ of $\calA$ corresponding to the cone $\tau_3$.
In particular, 
\[
   \lim_{i\to\infty} X_{\calA,w_i}\ =\ X(\calS,w)\,,
\]
for any weight $w\in\R^\calA_>$,
which shows the conclusion of Theorem~\ref{Th:creatingLimits} for this example.
\end{example}
%%%%%%%%%%%%%%%%%%%%%%%%%%%%%%%%%%%%%%%%%%%%%%%%%%%%%%%%%%%%%%%%%%%%%%%%%%%%%%%%%

%%%%%%%%%%%%%%%%%%%%%%%%%%%%%%%%%%%%%%%%%%%%%%%%%%%%%%%%%%%%%%%%%%%%%%%%%%%%%%%%%
%
\subsection{Hausdorff limits of translates}\label{limits}

We prove Theorem~\ref{Th:creatingLimits}, that
\[
   \lim_{i\to\infty} X_{\calA,w_i}\ =\ X(\calS,w)\,.
\]
We prove this limit in three steps:
\begin{enumerate}
 \item Any accumulation point of the sequence $\{X_{\calA,w_i}\mid i\in\N\}$ lies in the
       geometric realization $|\calS|$ of the regular subdivision $\calS$.
       (Lemma~\ref{L:first_third}.)
 \item For each face $\calF$ of $\calS$, any accumulation points of 
       $\{X_{\calA,w_i}\mid i\in\N\}$ in $\simplex^\calF$ lie in $X_{\calF,w}$.
       (Lemma~\ref{L:second_third}.)
 \item Every point of $X(\calS,w)$ is a limit point of $\{X_{\calA,w_i}\mid i\in\N\}$.
       (Lemma~\ref{L:third_third}.)
\end{enumerate}
%%%%%%%%%%%%%%%%%%%%%%%%%%%%%%%%%%%%%%%%%%%%%%%%%%%%%%%%%%%%%%%%%%%%%%%%%%%%%%%%%

By (i) and (ii) the accumulation points of $\{X_{\calA,w_i}\}$ are a subset of $X(\calS,w)$.
This, together with (iii), establishes the limit as observed in~\S~\ref{S:HD}.

%%%%%%%%%%%%%%%%%%%%%%%%%%%%%%%%%%%%%%%%%%%%%%%%%%%%%%%%%%%%%%%%%%%%%%%%%%%%%%%%%
\begin{lemma}\label{L:first_third}
 Let $y\in\simplex^\calA$ be an accumulation point of 
 $\{X_{\calA,w_i}\mid i\in\N\}$.
 Then $y\in|\calS|$.
\end{lemma}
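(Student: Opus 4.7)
The plan is to use Proposition~\ref{P:geometric_realization}, which identifies $|\calS|$ as the vanishing locus (inside $\simplex^\calA$) of the monomials $z_\ba z_\bb$ for pairs $\{\ba,\bb\}$ not contained in any face of $\calS$, and $z_\bc$ for points $\bc$ lying in no face. So for each such obstruction I must show the corresponding coordinate(s) of $y$ vanish. Both cases are parallel, so fix a forbidden pair $\{\ba,\bb\}$ and invoke Lemma~\ref{L:convex_combinations}(i) to produce a facet $\calG$ of $\calS$ and nonnegative coefficients giving the affine relation $\beta_\ba\ba+\beta_\bb\bb=\sum_{\bg\in\calG}\alpha_\bg\bg$. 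Proposition~\ref{P:equations} then turns this relation into an identity valid on $X_{\calA,w_i}$, which after rearrangement reads
\[
   z_\ba^{\beta_\ba}z_\bb^{\beta_\bb}\ =\ \Bigl(\prod_{\bg\in\calG} z_\bg^{\alpha_\bg}\Bigr)\cdot\exp(-\psi(v_i))\,,
\]
where $\psi(v):=\sum_{\bg\in\calG}\alpha_\bg v_\bg-\beta_\ba v_\ba-\beta_\bb v_\bb$ is a linear functional on $\R^\calA$. Since each $z_\bg\leq 1$, the entire right side tends to $0$ uniformly on $X_{\calA,w_i}$ provided $\psi(v_i)\to\infty$; passing to a limit through the sequence $z^{(i_k)}\to y$ then yields $y_\ba^{\beta_\ba}y_\bb^{\beta_\bb}=0$, hence $y_\ba y_\bb=0$ as required.

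The main obstacle, and the key step I would work out in detail, is proving $\psi(v_i)\to\infty$. I first observe that $\psi$ vanishes on $\Aff(\calA)$ (because the relation is affine), and that the strict inequality in~\eqref{Eq:liftIneq} of Lemma~\ref{L:convex_combinations}, applied to any $\lambda$ in the relative interior of $\sigma$ (for which $\calS_\lambda=\calS$), gives $\psi(\lambda)>0$. By linearity and continuity, $\psi\geq 0$ on all of $\sigma$, so $\defcolor{\rho}:=\sigma\cap\psi^\perp$ is a face of $\sigma$; it is a \emph{proper} face because $\Aff(\calA)\subset\rho$ but $\psi$ is strictly positive on the interior of $\sigma$ (the lemma is vacuous when $\calS$ is the trivial subdivision, so we may assume $\sigma\supsetneq\Aff(\calA)$).

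To exploit the minimum-face-of-boundedness property, write $v_i=u_i+\overline{v}_i$ with $u_i\in\sigma$ and $\overline{v}_i\to v$ as in Subsection~\ref{S:limit_set}. Since $\psi(\overline{v}_i)$ is bounded, it suffices to show $\psi(u_i)\to\infty$. Suppose instead that $\psi(u_i)\leq M$ along a subsequence. The polyhedron $\sigma\cap\{\psi\leq M\}$ has recession cone $\sigma\cap\psi^\perp=\rho$, so it decomposes as $\rho+K$ for some compact $K$; hence the $u_i$ along the subsequence lie in $\langle\rho\rangle+K$, which together with boundedness of $\overline{v}_i$ implies that this subsequence of $\{v_i\}$ is $\rho$-bounded. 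Since $\rho\subsetneq\sigma$ is a proper face of $\tau$, this contradicts the minimality of $\sigma$ guaranteed by Lemma~\ref{L:finalSubdivision}. Therefore $\psi(u_i)\to\infty$, completing the argument. The case of a forbidden singleton $\bc$ is entirely analogous: Lemma~\ref{L:convex_combinations}(ii) furnishes the affine relation $\bc=\sum_{\bg\in\calG}\alpha_\bg\bg$, the resulting binomial equation becomes $z_\bc=(\prod z_\bg^{\alpha_\bg})\exp(-\psi(v_i))$ with $\psi(v):=\sum\alpha_\bg v_\bg-v_\bc$, and the same divergence argument forces $y_\bc=0$.
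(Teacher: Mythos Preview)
Your argument is correct, and it takes a genuinely different route from the paper's proof. Both proofs reduce to showing that the linear functional $\psi$ (the paper's $-\varphi$) diverges to $+\infty$ along $\{v_i\}$, but the mechanisms differ.

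The paper chooses the facet $\calG$ from the \emph{finer} subdivision $\calS_\tau$ rather than from $\calS=\calS_\sigma$. This buys them $\psi\geq 0$ on all of $\tau$ (not just on $\sigma$), and they then argue directly with the $v_i$: if $\psi(v_i)$ stayed bounded, one could pick $v\in\sigma$ with $\psi(v)$ matching the bound, and then infinitely many $v_i-v$ would lie outside $\tau$, contradicting Lemma~\ref{L:cone_bounded}. This requires the extra observation that a facet of $\calS_\tau$ sits inside a facet of $\calS_\sigma$, so that the strict inequality~\eqref{Eq:liftIneq} also holds on the interior of $\sigma$.

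Your approach sidesteps both the refinement $\calS_\sigma\prec\calS_\tau$ and Lemma~\ref{L:cone_bounded}. By taking $\calG$ as a facet of $\calS_\sigma$ you get $\psi\geq 0$ only on $\sigma$, but that is enough once you pass to the decomposition $v_i=u_i+\overline{v}_i$ with $u_i\in\sigma$: the polyhedron $\sigma\cap\{\psi\leq M\}$ has recession cone the proper face $\rho=\sigma\cap\psi^\perp$, so (via Minkowski--Weyl) a bounded-$\psi$ subsequence of $\{u_i\}$, and hence of $\{v_i\}$, would be $\rho$-bounded, contradicting the minimality of $\sigma$ from Lemma~\ref{L:finalSubdivision}. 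This is a clean and self-contained argument. The paper's version has the advantage of reusing exactly the same $\varphi$/Lemma~\ref{L:cone_bounded} template in the proof of Lemma~\ref{L:last}, so the three steps of \S\ref{limits} share a uniform engine; your version trades that uniformity for a more direct path here.
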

%%%%%%%%%%%%%%%%%%%%%%%%%%%%%%%%%%%%%%%%%%%%%%%%%%%%%%%%%%%%%%%%%%%%%%%%%%%%%%%%%

%%%%%%%%%%%%%%%%%%%%%%%%%%%%%%%%%%%%%%%%%%%%%%%%%%%%%%%%%%%%%%%%%%%%%%%%%%%%%%%%%
\begin{proof}
We show that no point of $\simplex^\calA\smallsetminus|\calS|$ is
 an accumulation point of  $\{X_{\calA,w_i}\mid i\in\N\}$.
 Let $y\not\in|\calS|$.
 We will produce an $\epsilon>0$ and an $N$ such that if $i>N$ then 
 $d(y,X_{\calA,w_i})>\epsilon$.

 By Proposition~\ref{P:geometric_realization}, either there are $\ba,\bb\in\calA$ 
 with $\{\ba,\bb\}$ not a subset of any face of $\calS$ and $y_{\ba}y_{\bb}\neq 0$, or else
 there is a $\bc\in\calA$ that lies in no face of $\calS$ and $y_{\bc}\neq 0$.
 In the first case, set $\epsilon:=\frac{1}{2}\min\{y_{\ba},y_{\bb}\}$ and in the second
 case, set $\epsilon:=\frac{1}{2}y_{\bc}$.
 Suppose that we are in the first case.
 Since $\{\ba,\bb\}$ is not a subset of any face of $\calS_\sigma$, $\sigma$ is a face of
 $\tau$, and $S=S_\sigma$ is refined by $S_\tau$, $\{\ba,\bb\}$ is not a subset of any face of $\calS_\tau$.
 By Lemma~\ref{L:convex_combinations} there is a relation~\eqref{Eq:convex_ii} expressing a
 point $p$ in the interior of the segment $\overline{\ba,\bb}$ as a convex combination of 
 the points in a facet $\calG$ of $\calS_\tau$.
 By Proposition~\ref{P:equations} this gives the valid equation on points $z\in X_{\calA,w_i}$,
\[
    z_{\ba}^{\beta_{\ba}}z_{\bb}^{\beta_{\bb}}\ =\ 
    \frac{ w_i(\ba)^{\beta_{\ba}} w_i(\bb)^{\beta_{\bb}}}%
         {\prod_{\bg\in\calG} w_i(\bg)^{\alpha_{\bg}}}
    \cdot \prod_{\bg\in\calG} z_\bg^{\alpha_{\bg}}\,,
\]
 where we write $w_i(\ba)$ for $(w_i)_{\ba}$.
 As $0\leq z_{\bg}\leq 1$, $\alpha_\bg\geq 0$, and $w_i=\Exp(v_i)$, we have 
\[
    z_{\ba}^{\beta_{\ba}}z_{\bb}^{\beta_{\bb}}\ \leq\ 
    \exp\Bigl(\beta_{\ba} v_i(\ba) + \beta_{\bb} v_i(\bb) \ -\ 
       \sum_{\bg\in\calG} \alpha_{\bg}v_i(\bg)\Bigr)\,.
\]
 It suffices to show that the exponential has limit 0, which is equivalent to 
 \begin{equation}\label{Eq:quantity}
  \lim_{i\to\infty}
    \Bigl(\beta_{\ba} v_i(\ba) + \beta_{\bb} v_i(\bb) \ -\ 
       \sum_{\bg\in\calG} \alpha_{\bg}v_i(\bg)\Bigr) \ =\ -\infty\,.
 \end{equation}
 For then, as $0<\beta_\ba,\beta_\bb<1$, if $i$ is large enough, then one of $z_\ba$ or $z_\bb$
 is less than $\epsilon$, which implies that $d(y,z)>\epsilon$ and 
 thus  $d(y,X_{\calA,w_i})>\epsilon$.

To establish~\eqref{Eq:quantity}, consider the linear function $\varphi$ defined for
$\lambda\in\R^\calA$ by 
\[
   \defcolor{\varphi(\lambda)}\ :=\ \beta_\ba\lambda(\ba)+\beta_\bb\lambda(\bb)-
   \sum_{\bg\in\calG}\alpha_\bg\lambda(\bg)\,.
\]
Then the limit~\eqref{Eq:quantity} is equivalent to 
 \begin{equation}\label{Eq:limit_repeated}
   \lim_{i\to\infty} \varphi(v_i)\ =\ -\infty\,.
 \end{equation}
 By the inequality~\eqref{Eq:liftIneq}, $\varphi(\lambda)<0$ for 
 $\lambda$ in the relative interior of $\tau$, and thus $\varphi$ is nonpositive on $\tau$.
 As $\calG$ is a subset of a facet $\calF$ of $\calS_\sigma$, the 
 inequality~\eqref{Eq:liftIneq} shows that $\varphi(\lambda)$ is also negative for $\lambda$ in
 the relative interior of $\sigma$.  

 If $\varphi(v_i)$ does not have the limit~\eqref{Eq:limit_repeated}, then there is an 
 $M$ with $M<\varphi(v_i)$ for infinitely many $v_i$.
 Then $M$ is negative, as $\varphi(v_i)<0$ for all but finitely many $i\in\N$ since all but
 finitely many $v_i$ lie in the relative interior of $\tau$.
 Since $\varphi$ is negative on the relative interior of the cone $\sigma$, there is some
 $v\in\sigma$ with $\varphi(v)=M$, and so there are infinitely many $v_i$ with
 $\varphi(v)<\varphi(v_i)$.
 Such a $v_i$ has $0<\varphi(v_i-v)$, which implies that $v_i-v\not\in\tau$.
 Consequently, infinitely many elements of $\{v_i-v\mid i\in\N\}$ do not lie in $\tau$, which
 contradicts Lemma~\ref{L:cone_bounded}. 
 This establishes the limit~\eqref{Eq:quantity} and shows that there is some $N\in\N$ such
 that if $i>N$ then $d(y,X_{\calA,w_i})>\epsilon$.

 Suppose that we are in the second case of $y_\bc\neq 0$ with $\bc$ not lying in any face
 of $\calS$.
 As $\sigma$ is a face of $\tau$, $\calS=\calS_\sigma$ is refined by $\calS_\tau$ and we see
 that $\bc$ does not lie in any face of $\calS_\tau$.
 By Lemma~\ref{L:convex_combinations} there is a relation~\eqref{Eq:convex_i} expressing $\bc$
 as a convex combination of the points of a facet $\calG$ of $\calS_\tau$.
 By Proposition~\ref{P:equations} this gives the valid equation on points $z\in X_{\calA,w_i}$, 
\[
    z_\bc\ =\ \frac{w_i(\bc)} {\prod_{\bg\in\calG} w_i(\bg)^{\gamma_{\bg}}}
    \cdot \prod_{\bg\in\calG} z_\bg^{\gamma_{\bg}}
   \qquad\mbox{with}\quad 0\leq\gamma_\bg\leq 1\,.
\]
 As $0\leq z_\bg\leq 1$ and $w_i=\Exp(v_i)$, this implies that 
\[
   z_\bc\ \leq\ \exp\bigl( v_i(\bc)-\sum_{\bg\in\calG}\gamma_\bg v_i(\bg)\bigr)\,.
\]
 We complete the proof by showing that 
\[
   \lim_{i\to\infty}
     \bigl( v_i(\bc)-\sum_{\bg\in\calG}\gamma_\bg v_i(\bg)\bigr)
    \ =\ -\infty\,.
\]
 Set 
 $\varphi(\lambda):=\lambda(\bc)-\sum_{\bg\in\calG}\gamma_{\bg}\lambda(\bg)$, for
 $\lambda\in\R^\calA$. 
 This limit becomes $\lim_{i\to\infty}\varphi(v_i)=-\infty$, which is proved by the same
 arguments as for the limit~\eqref{Eq:limit_repeated}.
 Thus in this second case there is a number $N\in\N$ such that if $i>N$, then 
 $d(y,X_{\calA,w_i})>\epsilon$.
\end{proof}
%%%%%%%%%%%%%%%%%%%%%%%%%%%%%%%%%%%%%%%%%%%%%%%%%%%%%%%%%%%%%%%%%%%%%%%%%%%%%%%%%

%%%%%%%%%%%%%%%%%%%%%%%%%%%%%%%%%%%%%%%%%%%%%%%%%%%%%%%%%%%%%%%%%%%%%%%%%%%%%%%%%
\begin{lemma}\label{L:second_third}
 If $y\in|\calS|$ is an accumulation point of 
 $\{X_{\calA,w_i}\mid i\in\N\}$, then $y\in X(\calS,w)$.
\end{lemma}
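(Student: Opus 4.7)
The plan is to reduce from an accumulation point in the big simplex $\simplex^\calA$ to one in the face simplex $\simplex^\calF$, and then invoke the face-wise Hausdorff convergence that was already established in the proof of Lemma~\ref{L:complex_limit}.

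First I would choose a face $\calF$ of $\calS$ such that $y\in\simplex^\calF$, which exists since $y\in|\calS|=\bigcup_\calF \simplex^\calF$. Pick a subsequence $\{z_{i_k}\mid k\in\N\}$ with $z_{i_k}\in X_{\calA,w_{i_k}}$ and $z_{i_k}\to y$ in $\simplex^\calA$. Since $y\in\simplex^\calF$, the coordinates $(y)_\ba=0$ for $\ba\not\in\calF$, while $\sum_{\ba\in\calF}(y)_\ba=1$. Consequently, for all sufficiently large $k$, the point $z_{i_k}$ does not lie in $\simplex^{\calA\smallsetminus\calF}$, so the projection $\pi_\calF(z_{i_k})$ is defined. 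Applying Lemma~\ref{L:ell_1} gives
\[
  d\bigl(z_{i_k},\pi_\calF(z_{i_k})\bigr)\ =\ 2\sum_{\ba\in\calA\smallsetminus\calF}(z_{i_k})_\ba\ \longrightarrow\ 0\,,
\]
since $(z_{i_k})_\ba\to y_\ba=0$ for every $\ba\notin\calF$. Combining with $z_{i_k}\to y$, we conclude that $\pi_\calF(z_{i_k})\to y$ as well.

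Next, I would use the identification $\pi_\calF(X_{\calA,w_{i_k}})=X_{\calF,w_{i_k}}$ recalled in Section~2, so that $\pi_\calF(z_{i_k})\in X_{\calF,w_{i_k}}$ for all large $k$. The argument from Lemma~\ref{L:complex_limit} now applies verbatim to this single face: because $u_i\in\sigma$ and $\calF$ is a face of $\calS_\sigma$, the restriction $u_i|_\calF$ is affine on $\calF$, so by Lemma~\ref{L:stabilizer} we have $X_{\calF,w_i}=X_{\calF,\overline{w}_i}$; and since $\overline{w}_i\to w$, the equations of Proposition~\ref{P:equations} for $X_{\calF,\cdot}$ give
\[
   \lim_{i\to\infty} X_{\calF,w_i}\ =\ X_{\calF,w}
\]
in the Hausdorff topology. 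As $X_{\calF,w}$ is closed and is the Hausdorff limit of the sets containing $\pi_\calF(z_{i_k})$, the limit point $y$ of $\{\pi_\calF(z_{i_k})\}$ must lie in $X_{\calF,w}\subset X(\calS,w)$, completing the proof.

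The only subtle point—and the step I would be most careful about—is ensuring that $\pi_\calF(z_{i_k})$ is eventually well-defined and that its limit really is $y$ itself rather than some rescaling. This is handled by the quantitative bound from Lemma~\ref{L:ell_1}, which simultaneously guarantees definedness (the normalizing sum $\sum_{\calF}(z_{i_k})_\ba$ tends to $1$) and convergence of the projections to $y$. Everything else is a direct application of already-established Hausdorff convergence on each face.
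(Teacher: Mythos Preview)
Your proof is correct and follows essentially the same route as the paper: pick a face $\calF$ of $\calS$ with $y\in\simplex^\calF$, use Lemma~\ref{L:ell_1} to show that the projections $\pi_\calF(z)$ of nearby points $z\in X_{\calA,w_i}$ approach $y$, note that these projections lie in $X_{\calF,w_i}$, and then invoke the face-wise limit~\eqref{Eq:limit_F} from Lemma~\ref{L:complex_limit} to conclude $y\in X_{\calF,w}$. The only cosmetic difference is that you phrase the argument via a convergent subsequence $z_{i_k}\to y$, whereas the paper uses the $\epsilon$--$N$ formulation directly.
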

%%%%%%%%%%%%%%%%%%%%%%%%%%%%%%%%%%%%%%%%%%%%%%%%%%%%%%%%%%%%%%%%%%%%%%%%%%%%%%%%%

%%%%%%%%%%%%%%%%%%%%%%%%%%%%%%%%%%%%%%%%%%%%%%%%%%%%%%%%%%%%%%%%%%%%%%%%%%%%%%%%%
\begin{proof}
 Let $y\in|\calS|$, so that $y\in\simplex^\calF$ for some face $\calF$ of $\calS$.
 If $y$ is also an accumulation point of $\{X_{\calA,w_i}\mid i\in\N\}$, 
 then for all $\epsilon>0$ and for all $N>0$ there is an $i>N$ and  
 point $z\in X_{\calA,w_i}$ with $d(y,z)<\frac{1}{3}\epsilon$.
 Since $y_\ba=0$ for $\ba\in\calA\smallsetminus\calF$, we must have
\[
   \sum_{\ba\in\calA\smallsetminus\calF} z_\ba\ <\ \frac{1}{3}\epsilon\ ,
\]
 and so by Lemma~\ref{L:ell_1}, $d(z,\pi_{\calF}(z))<\frac{2}{3}\epsilon$, which implies that 
 $d(y,\pi_{\calF}(z))<\epsilon$.
 As $\pi_{\calF}(z)\in\pi_\calF(X_{\calA,w_i})=X_{\calF,w_i}$, this shows that $y$ is an
 accumulation point of $\{X_{\calF,w_i}\mid i\in\N\}$.
 Since we have $\lim_{i\to\infty}X_{\calF,w_i}=X_{\calF,w}$~\eqref{Eq:limit_F} and
 Lemma~\ref{L:complex_limit},
 we have $y\in X_{\calF,w}$.
\end{proof}
%%%%%%%%%%%%%%%%%%%%%%%%%%%%%%%%%%%%%%%%%%%%%%%%%%%%%%%%%%%%%%%%%%%%%%%%%%%%%%%%%

%%%%%%%%%%%%%%%%%%%%%%%%%%%%%%%%%%%%%%%%%%%%%%%%%%%%%%%%%%%%%%%%%%%%%%%%%%%%%%%%%
\begin{lemma}\label{L:third_third}
 Every point of $X(\calS,w)$ is a limit point of the sequence 
 $\{X_{\calA,w_i}\mid i\in\N\}$.
\end{lemma}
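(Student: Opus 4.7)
The plan is to reduce the statement to showing that every $y\in X_{\calF,w}^\circ$ (for $\calF$ a facet of $\calS$) is a limit point of $\{X_{\calA,w_i}\}$, and then extend to all of $X(\calS,w)$ by density and a diagonal extraction. Any $y\in X(\calS,w)$ lies in $X_{\calG,w}$ for some face $\calG$ of $\calS$, and $\calG$ is contained (as a subset of $\calA$) in some facet $\calF$ of $\calS$; this gives $X_{\calG,w}\subset X_{\calF,w}=\overline{X_{\calF,w}^\circ}$. Approximating $y$ by $y^{(k)}\in X_{\calF,w}^\circ$ and choosing, for each $k$, a sequence $z_i^{(k)}\in X_{\calA,w_i}$ with $d(z_i^{(k)},y^{(k)})\to 0$, a diagonal $z_i:=z_i^{k(i)}$ with $k(i)\to\infty$ slowly enough satisfies $d(z_i,y)\to 0$.

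Fix a facet $\calF$ of $\calS$ and a point $y\in X_{\calF,w}^\circ$. Since $\calF$ affinely spans $\R^d$, there is $x\in\R^d_>$ with $y_\fb$ proportional to $w(\fb)x^\fb$ for $\fb\in\calF$. Because $\calF$ is a face of $\calS=\calS_\sigma$ and $u_i\in\sigma$, the restriction $u_i|_\calF$ extends uniquely to an affine function $\widetilde{u}_i(\ba)=a_i+\langle b_i,\ba\rangle$ on $\R^d$. The naive lift $[w_i(\ba)x^\ba\mid\ba\in\calA]$ does not converge in general because $b_i$ drifts with $i$, so we absorb the drift and set
\[
z_i\ :=\ \bigl[w_i(\ba)(x\cdot\Exp(-b_i))^\ba\mid\ba\in\calA\bigr]\ \in\ X_{\calA,w_i}^\circ\,,
\]
normalized in $\simplex^\calA$. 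A direct computation gives, for $\fb\in\calF$ and any $\ba\in\calA$,
\[
\log z_i(\ba)-\log z_i(\fb)\ =\ \bigl(u_i(\ba)-\widetilde{u}_i(\ba)\bigr)\,+\,\bigl(\overline v_i(\ba)-\overline v_i(\fb)\bigr)\,+\,\langle\ba-\fb,\Log x\rangle\,.
\]
The middle and right-hand terms are bounded and converge. The first summand vanishes identically for $\ba\in\calF$, so the log-ratios among the $\calF$-coordinates of $z_i$ converge to those of $y$; for $\ba\notin\calF$ we must show $u_i(\ba)-\widetilde{u}_i(\ba)\to-\infty$, which forces $z_i(\ba)\to 0$ after normalization and yields $z_i\to y$.

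The crux is the claim that $u_i(\ba)-\widetilde{u}_i(\ba)\to-\infty$ for each $\ba\in\calA\smallsetminus\calF$. The linear functional $\varphi_\ba(\lambda):=\widetilde{\lambda|_\calF}(\ba)-\lambda(\ba)$ on $\R^\calA$ vanishes on $\Aff(\calA)$ and hence descends to $V:=\R^\calA/\Aff(\calA)$. Lemma~\ref{L:convex_combinations}(iii) gives $\varphi_\ba\geq 0$ on $\sigma$ and $\varphi_\ba>0$ on $\mathrm{relint}(\sigma)$. In $V$ the face $\sigma$ of $\tau$ is pointed, and its dual cone $\sigma^\vee$ is generated, modulo its lineality space $\langle\sigma\rangle^\perp$, by the facet normals $\psi_1,\ldots,\psi_k$ of $\sigma$. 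Thus $\varphi_\ba=\sum_j c_j\psi_j+\psi_0$ with $c_j\geq 0$, some $c_j>0$, and $\psi_0\in\langle\sigma\rangle^\perp$; since $\psi_0$ vanishes on $u_i\in\sigma$, it suffices to prove $\psi_j(u_i)\to\infty$ for every $j$. If some $\psi_j(u_i)$ admitted a bounded subsequence, then $\psi_j(v_i)=\psi_j(u_i)+\psi_j(\overline v_i)$ would be bounded on that subsequence, which together with the $\sigma$-boundedness of $\{v_i\}$ would make the subsequence $\rho_j$-bounded, where $\rho_j:=\sigma\cap\psi_j^\perp\subsetneq\sigma$---contradicting the minimality of $\sigma$ from Lemma~\ref{L:finalSubdivision}.

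The main obstacle is this last step: extracting the divergence $\psi_j(u_i)\to\infty$ at every facet normal of $\sigma$ uses the full strength of Lemma~\ref{L:finalSubdivision}, not merely $\sigma$-boundedness of $\{v_i\}$, and one must verify that $\varphi_\ba$ truly picks up a nonzero $c_j$ on an ``active'' facet of $\sigma$. Once these are established, the limit $z_i\to y$, and hence the lemma, follow from the displayed identity above.
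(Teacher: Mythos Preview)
Your argument is correct, and it takes a genuinely different route from the paper's in the key technical step.

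Both proofs reduce to showing that every $y\in X^\circ_{\calF,w}$, for $\calF$ a facet of $\calS$, is a limit point; the density argument is the same. Both then need to show that for $\ba\notin\calF$ the $\ba$-coordinate of a suitably chosen preimage in $X_{\calA,w_i}$ tends to zero. The paper does this via Lemma~\ref{L:last}: it passes to a facet $\calG$ of the \emph{finer} subdivision $\calS_\tau$ contained in $\calF$, writes $\bd$ as an affine combination over $\calG$ (Lemma~\ref{L:convex_combinations}(iii)), and then invokes the same $\varphi(v_i)\to-\infty$ argument already established in the proof of Lemma~\ref{L:first_third}, which rests on Lemma~\ref{L:cone_bounded}. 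You instead work entirely with $\calF$ and $\sigma$: you form the linear functional $\varphi_\ba$, observe it lies in $\sigma^\vee$ and is positive on the relative interior, decompose it as $\sum c_j\psi_j+\psi_0$ against the facet normals of $\sigma$, and then show $\psi_j(u_i)\to\infty$ for \emph{every} $j$ directly from the minimality clause of Lemma~\ref{L:finalSubdivision}. This sidesteps the auxiliary cone $\tau$, the refinement $\calS_\tau$, and Lemma~\ref{L:cone_bounded} altogether, at the cost of a short dual-cone computation. The paper's route has the virtue of recycling one divergence argument across all three lemmas; yours is more self-contained for this lemma and makes explicit the approximating points $z_i$ rather than packaging them into a uniform statement over $B_\delta$.

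Two small remarks. First, your $\varphi_\ba$ as written is only intrinsically defined on those $\lambda$ with $\lambda|_\calF$ affine; to treat it as a linear functional on $\R^\calA$ (and hence on $V$) you should fix once and for all an affine expression $\ba=\sum_{\bg\in\calF}\alpha_\bg\bg$ and set $\varphi_\ba(\lambda)=\sum\alpha_\bg\lambda(\bg)-\lambda(\ba)$; on $\sigma$ this agrees with your intended meaning and it visibly vanishes on $\Aff(\calA)$. Second, your closing worry about $\varphi_\ba$ picking up a nonzero $c_j$ on an ``active'' facet is moot: your own argument shows $\psi_j(u_i)\to\infty$ for \emph{every} facet normal $\psi_j$, so any single $c_j>0$ (guaranteed by $\varphi_\ba>0$ on $\mathrm{relint}(\sigma)$) forces the divergence.
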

%%%%%%%%%%%%%%%%%%%%%%%%%%%%%%%%%%%%%%%%%%%%%%%%%%%%%%%%%%%%%%%%%%%%%%%%%%%%%%%%%

%%%%%%%%%%%%%%%%%%%%%%%%%%%%%%%%%%%%%%%%%%%%%%%%%%%%%%%%%%%%%%%%%%%%%%%%%%%%%%%%%
\begin{proof}
 We prove that every $x\in X^\circ_{\calF,w}$ for $\calF$ a facet of $\calS$ is a
 limit point of $\{X_{\calA,w_i}\}$.
 This suffices, as the union of these sets,
\[
   X^\circ(\calS,w)\ :=\ 
   \bigsqcup_{\calF\mbox{\scriptsize\ a facet of }\calS} X^\circ_{\calF,w}\,.
\]
 is a dense subset of $X(\calS,w)$.

 Let $\calF$ be a facet of $\calS$.
 For $\delta>0$, define 
\[
   \defcolor{B_\delta}\ :=\ \{y\in\simplex^\calF\mid y_\fb\geq\delta\mbox{ for }\fb\in\calF\}\,.
\]
 Let $x\in X^\circ_{\calF,w}$ and $\epsilon>0$.
 Since $x_\fb\neq 0$ for $\fb\in\calF$, there is a 
 $\delta>0$ with $x_\fb\geq 2\delta$ for $\fb\in\calF$.
 By Lemma~\ref{L:last} below, there is a number 
 $N_1$ such that if $i>N_1$ and $y\in B_{\delta}\cap X_{\calF,w_i}$, 
 there is a  point $z\in X_{\calA,w_i}$ with $d(z,y)<\epsilon$.
 We showed (in~\eqref{Eq:limit_F}) that 
\[
    \lim_{i\to\infty} X_{\calF,w_i}\ =\ X_{\calF,w}\,.
\]
 Thus there is a number $N\geq N_1$ such that if $i>N$, there is a 
 point $y\in X_{\calF,w_i}$ with $d(x,y)<\min\{\epsilon,\delta\}$.
Since  $|x_\fb-y_\fb|<\delta$ and $x_\fb\ge 2\delta$, we have $y_\fb>\delta$ for all
   $\fb\in\calF$, and thus $y\in B_{\delta}$.
 As $i>N_1$, there is a point $z\in X_{\calA,w_i}$ with $d(y,z)<\epsilon$.
 Therefore $d(x,z)\leq 2\epsilon$, which shows that $x$ is a limit point of 
 $\{X_{\calA,w_i}\}$.
\end{proof}

%%%%%%%%%%%%%%%%%%%%%%%%%%%%%%%%%%%%%%%%%%%%%%%%%%%%%%%%%%%%%%%%%%%%%%%%%%%%%%%%%
\begin{lemma}\label{L:last}
 Let $\calF$ be a facet of $\calS$ and $\delta,\epsilon>0$.
 Then there exists a number $N$ such that for every $i>N$ and 
 $y\in B_\delta\cap X_{\calF,w_i}$ the point $z\in X_{\calA,w_i}$ with
 $\pi_\calF(z)=y$ satisfies $d(y,z)<\epsilon$. 
\end{lemma}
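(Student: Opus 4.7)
The plan is to express $z_\bd$ for each $\bd\in\calA\smallsetminus\calF$ explicitly in terms of $y$ and $w_i$ using Proposition~\ref{P:equations}, bound these coordinates uniformly over $y\in B_\delta$, and then conclude via Lemma~\ref{L:ell_1}. Since $y\in B_\delta$ has all coordinates positive, $y\in X^\circ_{\calF,w_i}$; because $\calF$ is a facet of $\calS=\calS_\sigma$, so that $\Delta_\calF$ has dimension $d$, the $\pi_\calF$-preimage $z\in X^\circ_{\calA,w_i}$ is unique. Normalise $z$ to lie in $\simplex^\calA$ and write $c:=\sum_{\fb\in\calF}z_\fb\leq 1$, so that $z_\fb=cy_\fb$ for $\fb\in\calF$. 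By Lemma~\ref{L:ell_1}, $d(y,z)=2\sum_{\bd\in\calA\smallsetminus\calF}z_\bd$, so it suffices to bound each $z_\bd$ uniformly over $y\in B_\delta$ for $i$ sufficiently large.

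For each $\bd\in\calA\smallsetminus\calF$, I would choose a facet $\calG_\bd$ of the finer subdivision $\calS_\tau$ that is contained in $\calF$; this is possible because $\calS_\tau$ refines $\calS_\sigma$ and $\calF$ is a $d$-dimensional facet of $\calS_\sigma$. Lemma~\ref{L:convex_combinations}(iii) then gives an affine expression $\bd=\sum_{\bg\in\calG_\bd}\alpha_\bg\bg$ with $\sum\alpha_\bg=1$. Applying Proposition~\ref{P:equations} on $X^\circ_{\calA,w_i}$, where negative $\alpha_\bg$ are allowed, and substituting $z_\bg=cy_\bg$ for $\bg\in\calG_\bd\subset\calF$, one obtains
\[
z_\bd\ =\ c\cdot\exp\bigl(\varphi_\bd(v_i)\bigr)\prod_{\bg\in\calG_\bd}y_\bg^{\alpha_\bg}\,,\qquad
\varphi_\bd(\lambda)\ :=\ \lambda(\bd)-\sum_{\bg\in\calG_\bd}\alpha_\bg\lambda(\bg)\,.
\]
Since $c\leq 1$ and $\delta\leq y_\bg\leq 1$, there is a constant $M_\bd$ depending only on $\delta$ and on the coefficients $\alpha_\bg$ with $z_\bd\leq M_\bd\exp(\varphi_\bd(v_i))$, uniformly over $y\in B_\delta$.

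It remains to show $\varphi_\bd(v_i)\to-\infty$, which I would do by replaying the argument in the last paragraph of the proof of Lemma~\ref{L:first_third}. Two sign facts are needed. First, $\varphi_\bd<0$ on the relative interior of $\tau$: this is the third inequality of~\eqref{Eq:liftIneq} with $\calG=\calG_\bd$ a facet of $\calS_\tau=\calS_\lambda$ and $\bd\notin\calG_\bd$. Second, $\varphi_\bd<0$ on the relative interior of $\sigma$: for $\lambda$ inducing $\calS_\sigma$ the restriction $\lambda|_\calF$ is affine, so the affine extension from $\calG_\bd\subset\calF$ agrees with the affine extension from $\calF$, and $\bd\notin\calF$ forces strict inequality in~\eqref{Eq:liftIneq}. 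Continuity upgrades the first to $\varphi_\bd\leq 0$ on all of $\tau$, and Lemma~\ref{L:cone_bounded} then forces $\varphi_\bd(v_i)\to-\infty$ exactly as in Lemma~\ref{L:first_third}. Choosing $N$ so large that $M_\bd\exp(\varphi_\bd(v_i))<\epsilon/(2|\calA\smallsetminus\calF|)$ for every $\bd\in\calA\smallsetminus\calF$ and every $i>N$ then gives $d(y,z)<\epsilon$.

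The step I expect to be the most delicate is securing uniformity in $y$: the formula for $z_\bd$ must feature only bases $y_\bg$ bounded away from $0$, which forces the particular choice $\calG_\bd\subset\calF$, and one must then verify that the resulting $\varphi_\bd$ retains the strict-sign behaviour on the relative interiors of both $\sigma$ and $\tau$ needed to invoke Lemma~\ref{L:cone_bounded}. Once those two sign conditions are in hand, the rest of the estimate is essentially a repetition of the argument used in Lemma~\ref{L:first_third}.
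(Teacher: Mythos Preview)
Your proposal is correct and follows essentially the same route as the paper: pick a facet $\calG\subset\calF$ of $\calS_\tau$, use Lemma~\ref{L:convex_combinations}(iii) and Proposition~\ref{P:equations} to express $z_\bd$ as a monomial in the $\calF$-coordinates times $\exp(\varphi_\bd(v_i))$, bound the monomial on $B_\delta$, and then push $\varphi_\bd(v_i)\to-\infty$ by the argument of~\eqref{Eq:limit_repeated}. The only cosmetic difference is that you track the normalization constant $c=\sum_{\fb\in\calF}z_\fb$ explicitly to obtain $z_\bd=c\exp(\varphi_\bd(v_i))\prod y_\bg^{\alpha_\bg}$, whereas the paper first computes the unnormalized coordinates $y_\bd$ and then observes $z_\bd\le y_\bd$; your added sentence justifying $\varphi_\bd<0$ on the relative interior of $\sigma$ (via $\lambda|_\calF$ being affine) spells out what the paper compresses into its back-reference to~\eqref{Eq:limit_repeated}.
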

%%%%%%%%%%%%%%%%%%%%%%%%%%%%%%%%%%%%%%%%%%%%%%%%%%%%%%%%%%%%%%%%%%%%%%%%%%%%%%%%%

%%%%%%%%%%%%%%%%%%%%%%%%%%%%%%%%%%%%%%%%%%%%%%%%%%%%%%%%%%%%%%%%%%%%%%%%%%%%%%%%%
\begin{proof}
 Let $\bd\in\calA\smallsetminus\calF$.
 As $\calF$ is a facet of $\calS_\sigma$ and $\calS_\sigma$ is refined by
 $\calS_\tau$, there is a facet $\calG$ of $\calS_\tau$ with $\calG\subset\calF$.
 By Lemma~\ref{L:convex_combinations} there is a relation~\eqref{Eq:affine_bd} 
 expressing $\bd$ as an affine combination of points of $\calG$, and by
 Proposition~\ref{P:equations} this gives the valid equation on points 
 $x\in X^\circ_{\calA,w_i}$,
 \begin{equation}\label{Eq:last_equation}
   x_\bd\ =\ \frac{w_i(\bd)}{\prod_{\bg\in\calG}w_i(\bg)^{\alpha_\bg}}
              \prod_{\bg\in\calG} x_{\bg}^{\alpha_\bg}\,.
 \end{equation}
 For each $\bd\in\calA\smallsetminus\calF$, fix one such affine
 expression~\eqref{Eq:affine_bd} for $\bd$ in terms of a subset $\calG$ of $\calF$ that is a
 facet in $\calS_\tau$, together with the corresponding equation~\eqref{Eq:last_equation} on 
 $X^\circ_{\calA,w_i}$. 

 For $y\in B_\delta\cap X_{\calF,w_i}$, we have $y\in X^\circ_{\calF,w_i}$, so there is a
 unique  $z\in X^\circ_{\calA,w_i}$ with $\pi_\calF(z)=y$.
 We find $z$ by first computing the number $y_\bd$ satisfying~\eqref{Eq:last_equation}
 (with $y_\bg$ substituted for $x_\bg$) for each $\bd\in\calA\smallsetminus\calF$. 
 Then the point $y'$ whose coordinates for $\fb\in\calF$ equal those of $y$ and whose
 other coordinates are these $y_\bd$ satisfies the equations~\eqref{Eq:last_equation}
 for $X^\circ_{\calA,w_i}$, but it is not a point of the standard simplex, $\simplex^\calA$ for
 the sum of its coordinates exceeds 1.
 Dividing each coordinate of $y'$ by this sum gives the point $z\in X^\circ_{\calA,w_i}$ 
 lying in the simplex $\simplex^\calA$ with $\pi_\calF(z)=y$.

 We extract from this discussion that the coordinate $z_\bd$ of $z$ is smaller than the
 coordinate $y_\bd$ of $y'$ that we computed from $y\in B_\delta\cap X_{\calF,w_i}$
 and~\eqref{Eq:last_equation}. 

 We show below that for every $\epsilon>0$ there is a number $N$ such that
 if $i>N$ and $y\in B_\delta\cap X_{\calF,w_i}$, then for each
 $\bd\in\calA\smallsetminus\calF$ the number $y_\bd$ that we compute from $y$
 and~\eqref{Eq:last_equation} is at most 
 $\frac{1}{2|\calA\smallsetminus\calF|}\epsilon$.
 Then if $z\in X_{\calA,w_i}$ is the point which projects to $y$, we have 
\[
   d(y,z)\ =\ 2\sum_{\bd\in\calA\smallsetminus\calF} z_\bd
    \ <\ 2\sum_{\bd\in\calA\smallsetminus\calF} y_\bd
    \ <\ 2\sum_{\bd\in\calA\smallsetminus\calF} 
          \frac{\epsilon}{2|\calA\smallsetminus\calF|}
    \ =\ \epsilon\,,
\]
 which will complete the proof.
 (The formula for $d(y,z)$ is from Lemma~\ref{L:ell_1}).

 First fix $\bd\in\calA\smallsetminus\calF$.
 For $y\in B_\delta$, the monomial from~\eqref{Eq:last_equation},
\[
   \prod_{\bg\in\calG}y_\bg^{\alpha_\bg}\,,    
\]
 is defined (as $y_\bg\geq\delta$) and is thus bounded on the compact set
 $B_\delta$ by some number, $L$.
 Then 
\[
   y_\bd\ \leq\ 
     \frac{w_i(\bd)}{\prod_{\bg\in\calG}w_i(\bg)^{\alpha_\bg}}\cdot L\,,     
\]
 if $y\in B_\delta\cap X_{\calF,w_i}$.
 We will show that the coefficient of $L$ has limit zero as $i\to\infty$.
 Since this holds for all $\bd\in\calA\smallsetminus\calF$, there is a number
 $N$ such that if $i>N$, then every number
 $y_\bd$ is bounded by $\frac{1}{2|\calA\smallsetminus\calF|}\epsilon$, which will complete the
 proof. 

 Taking logarithms, this limit being zero is equivalent to 
 \[
   \lim_{i\to\infty} 
    \bigl( v_i(\bd)\ -\ \sum_{\bg\in\calG} \alpha_\bg v_i(\bg)\bigr)
    \ =\ -\infty\,.       
 \]
 Define the linear function $\varphi$ on $\R^\calA$ by 
\[
  \defcolor{\varphi(\lambda)}\ :=\ 
     \lambda(\bd)\ -\ \sum_{\bg\in\calG} \alpha_\bg \lambda(\bg)\,,   
\]   
 where $\lambda\in\R^\calA$.
 Then our limit becomes $\lim_{i\to\infty}\varphi(v_i)=-\infty$, which is proved by the same
 arguments as for the limit~\eqref{Eq:limit_repeated}.
\end{proof}
%%%%%%%%%%%%%%%%%%%%%%%%%%%%%%%%%%%%%%%%%%%%%%%%%%%%%%%%%%%%%%%%%%%%%%%%%%%%%%%%%

%%%%%%%%%%%%%%%%%%%%%%%%%%%%%%%%%%%%%%%%%%%%%%%%%%%%%%%%%%%%%%%%%%%%%%%%%%%%%%%%%
\section*{Acknowledgements} 
The first and third authors would like to thank Ragni Piene for many useful conversations, and we all thank the
referee for constructive comments.

%%%%%%%%%%%%%%%%%%%%%%%%%%%%%%%%%%%%%%%%%%%%%%%%%%%%%%%%%%%%%%%%%%%%%%%%%%%%%

\def\cprime{$'$}
\providecommand{\bysame}{\leavevmode\hbox to3em{\hrulefill}\thinspace}
\providecommand{\MR}{\relax\ifhmode\unskip\space\fi MR }
% \MRhref is called by the amsart/book/proc definition of \MR.
\providecommand{\MRhref}[2]{%
  \href{http://www.ams.org/mathscinet-getitem?mr=#1}{#2}
}
\providecommand{\href}[2]{#2}

%%%%%%%%%%%%%%%%%%%%%%%%%%%%%%%%%%%%%%%%%%%%%%%%%%%%%%%%%%%%%%%%%%%%%%%%%%%%%

\end{document}